\documentclass[11pt]{article}

\usepackage{mathrsfs}

\usepackage{soul}

\usepackage{tikz}
\usetikzlibrary{calc}

\usepackage[margin=1in]{geometry}
\usepackage[utf8]{inputenc}
\usepackage{enumerate}
\usepackage{amsfonts}
\usepackage{amsmath}
\usepackage{amssymb}
\usepackage{hyperref}
\usepackage{amsthm}
\usepackage{xcolor}
\usepackage{tikz}
\usepackage{comment}
\usepackage{kotex}
\usepackage[numbers]{natbib}

\usepackage{colonequals}
\usepackage{theoremref}
\usepackage{enumitem}
\numberwithin{equation}{section}
\usepackage{appendix}

\newcommand{\e}{\mathbb E}

\newcommand{\p}{\mathbb P}
\newcommand{\1}{\mymathbb{1}}

\DeclareMathAlphabet{\mymathbb}{U}{BOONDOX-ds}{m}{n}
\newcommand{\eps}{\varepsilon}

\newcommand{\la}{\langle}
\newcommand{\ra}{\rangle}

\newcommand{\GSE}{\mathsf{GSE}}

\newcommand{\bal}{{\sf bal}}

\newtheorem{theorem}{Theorem}[section]

\newtheorem{proposition}[theorem]{Proposition}
\newtheorem{lemma}[theorem]{Lemma}

\newtheorem{definition}[theorem]{Definition}

\theoremstyle{definition} 
\newtheorem*{ack}{Acknowledgments}
\newtheorem{remark}[theorem]{Remark}

\begin{document}
\title{Color symmetry in the Potts spin glass at high temperature}
\date{}

 
\author{ Heejune Kim \thanks{ Email: kim01154@umn.edu.} }

\maketitle

\begin{abstract}


We show that color symmetry is preserved at high temperatures in the Potts spin glass model with $\kappa \ge 3$ colors. Our proof employs the second moment method applied to the balanced model with a suitable centering of the Hamiltonian, while incorporating results from the non-disordered Potts model \cite{EW90}. For $\kappa = 2$, we exploit the model's gauge symmetry to show that unbalanced configurations occur with exponentially small probability at all temperatures $\beta \in [0, \infty]$. 

\end{abstract}


\section{Introduction and main results}

The Potts spin glass is a $\kappa$-color generalization of the Sherrington–Kirkpatrick (SK) model.
For a system of $N$ spins, let $\Sigma_\kappa = [\kappa]^N$ denote the set of configurations. For any $\sigma \in \Sigma_\kappa$, the magnetization vector $d(\sigma) \in \mathcal{D}_\kappa \colonequals \{d \in [0,1]^\kappa : d_1+\dots+d_\kappa = 1\}$ is given by$$d(\sigma) = \bigl(N^{-1}|\{i \in [N] : \sigma_i = a\}|\bigr)_{a \in [\kappa]}.$$
We define the set of constrained configurations by $\Sigma^d_\kappa = \{ \sigma \in \Sigma_\kappa : d(\sigma) = d \}$ for $d\in\mathcal D_\kappa$.
In particular, a configuration is called balanced if $d(\sigma) = \kappa^{-1}\1$, and we let $\Sigma^\bal_\kappa$ denote the collection of all such configurations.
We identify each configuration $\sigma = (\sigma_1, \dots, \sigma_N)$ with a $\kappa \times N$ matrix whose columns are standard basis vectors $(e_a)_{a\in[\kappa]}$ in $\mathbb{R}^\kappa$.
By a slight abuse of notation,  we will use $\sigma_i = e_a$ and $\sigma_i = a$ interchangeably.
The Hamiltonian of the  Potts spin glass model is given by \begin{equation}\label{eq: Hamiltonian}
    H_N(\sigma)=\frac{1}{\sqrt{N}}\sum_{i,j \in [N]}g_{ij}\1\{\sigma_i=\sigma_j\}=\frac{1}{\sqrt{N}}\sum_{i,j \in [N]}g_{ij}\sigma_i^\intercal \sigma_j  \quad  \text{for} \quad \sigma \in \Sigma_\kappa,
\end{equation} where $(g_{ij})_{ i,j\in [N]}$ are i.i.d. standard normal random variables.
 Associated with \eqref{eq: Hamiltonian} are the ground state energy $\GSE_{N,\kappa} = N^{-1} \e \max_{\sigma\in\Sigma_\kappa} H_N(\sigma)$  and the free energy at inverse temperature $\beta \in [0, \infty)$,
\begin{equation*}
    F_{N,\beta} = \frac{1}{N} \e \log Z_{N,\beta}, \quad  \text{where}\quad Z_{N,\beta} \colonequals \sum_{\sigma \in \Sigma_\kappa} \exp(\beta H_N(\sigma))
\end{equation*} is called the partition function.
The corresponding  Gibbs measure $G_{N,\beta}$ is specified by  its probability mass function \begin{equation*}\label{eq: Gibbs measure}
    G_{N,\beta}(\sigma)= \frac{\exp ( \beta H_N(\sigma) )}{Z_{N,\beta}}  \quad \text{for} \quad \sigma \in \Sigma_\kappa,
\end{equation*} 
and we denote the average with respect to the product measure $G_{N,\beta}^{\otimes \infty}$ by $\la\cdot\ra_{N,\beta}.$
In the zero-temperature limit $\beta = \infty$, $G_{N,\infty}$ denotes the uniform distribution on the set of configurations maximizing $H_N(\sigma)$.

We similarly define the balanced ground state energy $\GSE^\bal_{N,\kappa}$, the balanced free energy $F^\bal_{N,\beta}$, and the balanced partition function $Z^\bal_{N,\beta}$ by restricting to the balanced configurations $\Sigma^\bal_\kappa$ and assuming $N\in\kappa \mathbb N$.
More generally,  the model  constrained at $\Sigma^d_\kappa$ can be defined for a magnetization vector $d \in \mathcal{D}_\kappa \cap \mathbb{Q}^\kappa$ and $N \in \operatorname{lcm}(d)\mathbb{N}$, where $\operatorname{lcm}(d)$ is the least common multiple of the denominators of $d$ in reduced form.
Henceforth, we assume that $N$ satisfies the necessary divisibility conditions whenever a constraint is imposed on the model.

Panchenko \cite{Pan18_Potts} established the Parisi formula for the limiting free energy  $\lim_{N\to\infty} F_{N,\beta}$ as a supremum of the Parisi formula for the model constrained around $\Sigma^d_\kappa$ for each  $d\in\mathcal D_\kappa$.
Building on this work, Bates--Sohn \cite[Theorem 1.7]{BS24} demonstrated that this constrained Parisi formula indeed coincides with the limiting constrained free energy $\lim_{N\to\infty}F_{N,\beta}^d$ which is uniformly continuous in $d \in \mathcal D_\kappa \cap \mathbb Q^\kappa$ (see \cite[Lemma E.4]{BS24}).
We can  summarize these findings as  \begin{equation}\label{eq: summary of Pan and BS}
    \lim_{N\to\infty} F_{N,\beta} =\max_{d\in\mathcal D_\kappa}\lim_{N\to\infty}F_{N,\beta}^d,
\end{equation}
where we extended $\lim_{N\to\infty}F^d_{N,\beta}$  to all $d \in \mathcal D_\kappa$ by uniform continuity.
In the special case of  balanced configurations,  Bates--Sohn  \cite[Theorems 1.3 and 1.8]{BS24} showed that the Parisi formula for the limiting balanced free energy  $\lim_{N\to\infty}F^\bal_{N,\beta}$ simplifies considerably in terms of its order parameter, resembling the SK model.
They further suggested that this limiting balanced free energy  always achieves the maximum in \eqref{eq: summary of Pan and BS}, that is, color symmetry holds at any $\beta\in [0,\infty)$.
We formalize this notion as follows, including the case of limiting ground state energies.
\begin{definition}[Color symmetry]
    We say that  the Potts spin glass model preserves color symmetry   at inverse temperature $\beta\in [0,\infty)$  if $\lim_{N\to\infty} F^\bal_{N,\beta} = \lim_{N\to\infty}F_{N,\beta}$, and at  zero-temperature $\beta=\infty$ if $\lim_{N\to\infty}\GSE^\bal_{N,\kappa}=\lim_{N\to\infty}\GSE_{N,\kappa}$\footnote{The  existence of the limits  $\lim_{N\to\infty}\GSE^\bal_{N,\kappa}$ and $\lim_{N\to\infty}\GSE_{N,\kappa}$ easily follows from that of the corresponding free energies. Indeed, from the elementary inequalities $\GSE_{N,\kappa} \le \beta^{-1}F_{N,\beta}\le \GSE_{N,\kappa}+\beta^{-1}\log \kappa$  and the existence of the limit $\lim_{N\to\infty}F_{N,\beta}$, we conclude that $\lim_{N\to\infty}\GSE_{N,\kappa}=\inf_{\beta>0}(\beta^{-1}\lim_{N\to\infty}F_{N,\beta})$. The balanced case is similar.}.
    Otherwise, the model breaks color symmetry at $\beta.$
\end{definition}
\begin{remark}
By the concentration of Gaussian measures, color symmetry breaking at $\beta$ is equivalent to the statement that the balanced configurations occur with exponentially small  probability under $\e G_{N,\beta}$ (see, e.g.,  \cite[Proposition 13.4.3]{Talagrand2013vol2}).
\end{remark}

Regarding the aforementioned prediction by Bates--Sohn \cite{BS24}, Mourrat \cite{Mou25} showed, contrary to the prediction, that color symmetry breaks  on an  $O(\sqrt{\kappa})$-window of $\beta$ centered at $2\kappa/(3\sqrt{\pi})$ whenever $\kappa\ge 58$. 
We expect that color symmetry breaks at zero temperature for all $\kappa \ge 3$ (see the first item in the open problems below),  as predicted in the physics literature  \cite{CPR12, SPR95, ES83_curious, ES83}.

In the present work, we establish color symmetry at high temperatures for all $\kappa\ge 3.$
To state our main result, we define a high-temperature threshold for color symmetry breaking, for $\kappa \ge 3$, as
\begin{equation}
    \beta_\kappa = \sqrt{\kappa(\kappa-1)\log(\kappa-1)} \cdot \min \Bigl\{ \frac{1}{\sqrt{\kappa-2}}, \, \frac{\sqrt{2}}{\kappa-2} \Bigr\} . \label{eq: high temperature threshold}
\end{equation}
For $\kappa\ge 4$, this minimum is achieved by the second term, which is obtained from the critical temperature of a corresponding non-disordered ferromagnetic Potts model in \cite[Theorem 2.1]{EW90}. 
Our main theorem characterizes the high-temperature behavior for all $\kappa \ge 3$ within the regime $\beta\in[0,\beta_\kappa).$
\begin{theorem}\label{thm: high temp}
    For any $\kappa\ge 3$ and  $\beta \in [0, \beta_\kappa)$, we have \[ \lim_{N\to\infty} \e F^\bal_{N,\beta}=\lim_{N\to\infty} \e F_{N,\beta} = \log \kappa +\frac{\beta^2 (\kappa-1)}{2\kappa^2}.\] 
    In particular,  color symmetry is preserved in this high temperature regime.
\end{theorem}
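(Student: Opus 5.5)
The plan is to first remove the trivial part of the Hamiltonian, and then compare an annealed upper bound for the full model with a second-moment lower bound for the balanced model. Write $u=\kappa^{-1}\1$ and $\bar\sigma_i=\sigma_i-u$. Since $\sigma_i^\intercal \1=1$, we have $\sigma_i^\intercal\sigma_j=\bar\sigma_i^\intercal\bar\sigma_j+\kappa^{-1}$. Hence
\[
H_N(\sigma)=\tilde H_N(\sigma)+\frac{1}{\kappa\sqrt N}\sum_{i,j}g_{ij},\qquad \tilde H_N(\sigma)=\frac{1}{\sqrt N}\sum_{i,j}g_{ij}\bar\sigma_i^\intercal\bar\sigma_j .
\]
The second term does not depend on $\sigma$, has mean zero and is $O(\sqrt N)$. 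So both $F_{N,\beta}$ and $F^\bal_{N,\beta}$ are unchanged when $H_N$ is replaced by $\tilde H_N$, and I work with $\tilde Z$ and $\tilde Z^\bal$ from now on. A direct computation gives
\[
\e\tilde H_N(\sigma)^2=\frac1N\sum_{i,j}\bigl(\1\{\sigma_i=\sigma_j\}-\kappa^{-1}\bigr)^2=N\Bigl(\bigl(1-\tfrac2\kappa\bigr)\|d(\sigma)\|^2+\tfrac1{\kappa^2}\Bigr).
\]
For balanced $d$ this equals $N(\kappa-1)/\kappa^2$, which is exactly the variance appearing in the claimed value $\log\kappa+\beta^2(\kappa-1)/(2\kappa^2)$.

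\emph{Upper bound.} By Jensen's inequality, $F_{N,\beta}\le N^{-1}\log\e\tilde Z_{N,\beta}+o(1)$. Counting configurations with Stirling's formula gives
\[
\frac1N\log \e\tilde Z_{N,\beta}=\max_{d\in\mathcal D_\kappa}\Bigl[\mathcal H(d)+\frac{\beta^2}{2}\Bigl(\bigl(1-\tfrac2\kappa\bigr)\|d\|^2+\tfrac1{\kappa^2}\Bigr)\Bigr]+o(1),
\]
where $\mathcal H(d)=-\sum_a d_a\log d_a$ is the entropy. This is exactly the mean-field (Curie--Weiss) ferromagnetic Potts variational problem with coupling $\beta^2(1-2/\kappa)$. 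By \cite[Theorem 2.1]{EW90}, its maximizer is $d=u$ precisely when the coupling is below the critical value. I expect this condition to translate into $\beta<\sqrt{2\kappa(\kappa-1)\log(\kappa-1)}/(\kappa-2)$, which is the second term in \eqref{eq: high temperature threshold}. For such $\beta$ the upper bound is $\log\kappa+\beta^2(\kappa-1)/(2\kappa^2)$.

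\emph{Lower bound.} Since $\tilde Z\ge\tilde Z^\bal$, it suffices to show $\liminf_N F^\bal_{N,\beta}\ge\log\kappa+\beta^2(\kappa-1)/(2\kappa^2)$. The first moment $N^{-1}\log\e\tilde Z^\bal_{N,\beta}$ equals this value up to $o(1)$. For the second moment, I parametrize pairs $(\sigma,\sigma')$ of balanced configurations by their overlap matrix $R=N^{-1}\sigma\sigma'^\intercal$, whose row and column sums all equal $1/\kappa$. The covariance is
\[
\e\tilde H_N(\sigma)\tilde H_N(\sigma')=N\|R-\kappa^{-2}\1\1^\intercal\|_F^2 .
\]
Therefore
\[
\frac{\e(\tilde Z^\bal)^2}{(\e\tilde Z^\bal)^2}\approx\sum_R\exp\Bigl(-N\bigl[\mathrm{KL}\bigl(R\,\|\,\kappa^{-2}\1\1^\intercal\bigr)-\beta^2\|R-\kappa^{-2}\1\1^\intercal\|_F^2\bigr]\Bigr).
\]
The aim is to show that the bracket is strictly positive away from $R=\kappa^{-2}\1\1^\intercal$ whenever $\beta<\beta_\kappa$. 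At that point the bracket is nondegenerate, so a local Gaussian computation gives $\e(\tilde Z^\bal)^2\le C(\e\tilde Z^\bal)^2$. The Paley--Zygmund inequality then yields $\p\bigl(\tilde Z^\bal\ge\tfrac12\e\tilde Z^\bal\bigr)\ge 1/(4C)$. Gaussian concentration of $N^{-1}\log\tilde Z^\bal$, with fluctuations $O(N^{-1/2})$, upgrades this constant-probability event to the required statement about $F^\bal_{N,\beta}$.

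\emph{Main obstacle.} The hardest step is the global positivity of the bracket over the transportation polytope of couplings with uniform marginals; local positivity near the independent coupling is easy. The approach I would try first is to reduce it to a one-dimensional problem. Decompose $R$ row by row and use convexity of $\mathrm{KL}$ together with the scalar entropy inequality that underlies the Potts critical point, namely the inequality $\mathcal H(d)\le\log\kappa-\tfrac{c}{2}\|d-u\|^2$, which holds with optimal constant exactly at the threshold in \cite{EW90}. This inequality would be applied to each row $\kappa R_{a\cdot}\in\mathcal D_\kappa$. Summing the row bounds should control $\mathrm{KL}$ by a multiple of $\|R-\kappa^{-2}\1\1^\intercal\|_F^2$. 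I expect that this reduction, combined with the upper-bound condition, produces the minimum of the two expressions in \eqref{eq: high temperature threshold}, and that the term $1/\sqrt{\kappa-2}$ is what controls the case $\kappa=3$.
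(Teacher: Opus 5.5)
Your proposal is correct and follows essentially the same route as the paper. It centers by $\kappa^{-1}$, uses Jensen plus the Curie--Weiss Potts result of \cite{EW90} at coupling $\beta^2(1-2/\kappa)$ for the upper bound, and runs a second-moment argument on the balanced model whose exponent is handled row by row via \cite{EW90} at coupling $2\beta^2/\kappa$; these two couplings are exactly where the two terms of $\beta_\kappa$ come from. The only difference is cosmetic: you phrase the row step as the sharp inequality $D(v\|\kappa^{-1}\1)\ge \frac{(\kappa-1)\log(\kappa-1)}{\kappa-2}\|v-\kappa^{-1}\1\|^2$ summed over the rows $v=\kappa r_{a\cdot}$ (an exact identity, so convexity of the divergence is not needed), which yields the global bound $D(r\|\kappa^{-2}\1\1^\intercal)\ge\frac{\kappa(\kappa-1)\log(\kappa-1)}{\kappa-2}\|r-\kappa^{-2}\1\1^\intercal\|_F^2$ covering the near-uniform region as well, whereas the paper uses uniqueness of the minimizer with compactness and pigeonhole away from $\kappa^{-2}\1\1^\intercal$ and a separate local Taylor expansion of the divergence near it.
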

\begin{remark}
    The right-hand side  can be identified with the replica symmetric solution of the balanced model, where the order parameter is given by the constant path $\pi(t) = \kappa^{-2}\1\1^\intercal$ for all $0<t\le 1$ in the notation of \cite[Theorem 1.3]{BS24}. 
    Notably, this coincides with the predicted high-temperature value of the unconstrained limiting free energy in the physics literature \cite{SPR95, LS84}. 
\end{remark}

We complement Theorem~\ref{thm: high temp} with a result for   $\kappa=2$, showing that unbalanced configurations occur with exponentially small  probability for all $\beta \in [0,\infty]$.
Denote  the $\ell^\infty$-norm on  $\mathbb R^2$ by $\|\cdot\|_\infty$.
\begin{proposition}\label{prop: kappa=2}
    Assume $\kappa=2$. Then, for any $\beta \in [0,\infty]$ and $\eps >0$, \[ \e G_{N,\beta} \bigl( \|d(\sigma)-2^{-1}\1 \|_\infty \ge \eps \bigr)\le 2 e^{-\eps^2 N}.\]
    Consequently, color symmetry is preserved for all $\beta\in [0,\infty]$.
\end{proposition}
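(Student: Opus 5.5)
The plan is to use the gauge symmetry of the $\kappa=2$ model to show that, after averaging over the disorder, the magnetization under $\e G_{N,\beta}$ has the same law as under the uniform measure on $\Sigma_2$. A Hoeffding bound then gives the estimate, and a short argument with the results summarized in \eqref{eq: summary of Pan and BS} gives color symmetry.

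\emph{Step 1 (reduction to Ising form).} Write $s_i=\pm1$ according as $\sigma_i=1$ or $2$. Then $\1\{\sigma_i=\sigma_j\}=(1+s_is_j)/2$, so
\[
H_N(\sigma)=\frac{1}{2\sqrt N}\sum_{i,j}g_{ij}+\frac{1}{2\sqrt N}\sum_{i,j}g_{ij}s_is_j .
\]
The first term does not depend on $\sigma$ and cancels in $G_{N,\beta}$. Moreover $d_1(\sigma)=\frac12+\frac1{2N}\sum_i s_i$ and $d_2=1-d_1$, so $\|d(\sigma)-2^{-1}\1\|_\infty=|d_1-\tfrac12|$.

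\emph{Step 2 (gauge invariance).} Fix $\tau\in\{\pm1\}^N$. The map $g_{ij}\mapsto \tau_i\tau_jg_{ij}$ preserves the law of the i.i.d.\ Gaussian family; the diagonal terms are unchanged since $\tau_i^2=1$. Under this map the Gibbs measure for $\beta<\infty$ becomes its pushforward under $s\mapsto \tau s$ (coordinatewise product), and so does the uniform measure on maximizers at $\beta=\infty$. Hence $\e G_{N,\beta}(A)=\e G_{N,\beta}(\tau A)$ for every event $A$ and every $\tau$. Averaging over $\tau$ uniform on $\{\pm1\}^N$ and using Fubini gives
\[
\e G_{N,\beta}(A)=\e\sum_s G_{N,\beta}(s)\,\p_\tau(\tau s\in A)=|A|/2^N,
\]
because $\tau s$ is uniform for each fixed $s$.

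\emph{Step 3 (Hoeffding).} Taking $A=\{|d_1-\tfrac12|\ge\eps\}$, Hoeffding's inequality for the i.i.d.\ uniform signs gives the bound $2e^{-2\eps^2N}\le 2e^{-\eps^2N}$. This proves the displayed inequality for all $\beta\in[0,\infty]$.

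\emph{Step 4 (color symmetry).} This is the step needing care, because the probability estimate has to be converted into a free energy statement.
\begin{itemize}
\item Let $Z^{\rm near}$ be the partition function restricted to $\{|d_1-\tfrac12|<\eps\}$. Then $\log Z_{N,\beta}-\log Z^{\rm near}=-\log(1-G_{N,\beta}(\text{far}))$, where far is the complementary event.
\item By Markov's inequality, $G_{N,\beta}(\text{far})\le1/2$ outside an event of probability at most $4e^{-\eps^2N}$. On this good event the difference is at most $\log 2$.
\item On the bad event, bound the difference by $\log Z_{N,\beta}-\beta H_N(\sigma^0)$ for a fixed near configuration $\sigma^0$. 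This is at most $N\log2+2\beta\max_\sigma|H_N(\sigma)|$, whose second moment is $O(N^2)$. Cauchy--Schwarz then makes its contribution exponentially small.
\item Hence $F_{N,\beta}=N^{-1}\e\log Z^{\rm near}+o(1)$.
\item $Z^{\rm near}$ is a sum over at most $N+1$ values of $d$, so $\e\log Z^{\rm near}\le \log(N+1)+\e\max_d\log Z^d_{N,\beta}$. By Gaussian concentration of each $\log Z^d_{N,\beta}$ (with Lipschitz constant $O(\beta\sqrt N)$), the expected maximum is within $O(\sqrt{N\log N})$ of the maximum of the expectations.
\item Therefore $\lim F_{N,\beta}\le\sup_{|d_1-1/2|\le\eps}\lim F^d_{N,\beta}$. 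Letting $\eps\to0$ and using the uniform continuity in $d$ from \cite{BS24} yields $\lim F_{N,\beta}\le \lim F^\bal_{N,\beta}$; the reverse inequality is trivial.
\end{itemize}
The case $\beta=\infty$ then follows from the footnote's formula $\lim\GSE=\inf_{\beta>0}\beta^{-1}\lim F_{N,\beta}$ and its balanced analogue.
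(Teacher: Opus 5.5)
Your Steps 1--3 are correct, and they take a cleaner route than the paper. You use the whole gauge group $g_{ij}\mapsto\tau_i\tau_jg_{ij}$ at once to show that $\e G_{N,\beta}$ is \emph{exactly} the uniform measure on $\Sigma_2$ for every $\beta\in[0,\infty]$; the case $\beta=\infty$ follows directly because the set of maximizers transforms the same way. Hoeffding then gives $2e^{-2\eps^2N}$. The paper instead proceeds in four steps:
\begin{itemize}
\item it flips one site at a time to kill correlations containing an odd-degree index (Lemma~\ref{lem: multi-spin correlation});
\item it bounds the even moments by a lossy count (Lemma~\ref{lem: magnetization moment calculation});
\item it passes to the moment generating function (Lemma~\ref{lem: exponential bound});
\item it applies Chernoff, which yields only the exponent $\eps^2$.
\end{itemize}
The moment identities it obtains are exactly those of the uniform measure, so your observation subsumes those three lemmas and sharpens the constant.

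The genuine gap is in Step~4, at the word \emph{Therefore} in the last bullet. The preceding bullets actually give
\[
F_{N,\beta}\le \max_{k:\,|k/N-1/2|<\eps}\ \frac1N\e\log Z^{(k/N,\,1-k/N)}_{N,\beta}+o(1).
\]
This is a maximum of \emph{finite-$N$} constrained free energies at constraints $d_N=(k/N,1-k/N)$ that move with $N$ and have denominators of order $N$.
\begin{itemize}
\item The limit $\lim_M F^d_{M,\beta}$ is only known to exist for each \emph{fixed} rational $d$, along $M\in\operatorname{lcm}(d)\mathbb N$.
\item Uniform continuity of the \emph{limiting} function says nothing about how far $F^{d_N}_{N,\beta}$ is from $\lim_M F^{d_N}_{M,\beta}$.
\item So you need an estimate uniform in $d$, and none is supplied.
\end{itemize}

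The gap can be closed in either of two ways.
\begin{itemize}
\item \emph{Superadditivity.} Interpolate (Guerra--Toninelli) between $H_{N+M}$ restricted to $\Sigma^d_N\times\Sigma^d_M$ and two independent copies. Self-overlaps are fixed at $\mathrm{diag}(d)$ and $\|\cdot\|_F^2$ is convex, so $\e\log Z^d_{N+M}\ge\e\log Z^d_N+\e\log Z^d_M$. By Fekete, $F^d_{N,\beta}\le\lim_M F^d_{M,\beta}$ for every admissible $N$, which is exactly the missing inequality.
\item \emph{Direct comparison.} Map each near configuration to a balanced one by flipping fewer than $\eps N$ spins. The number of preimages is at most $\sum_{m<\eps N}\binom Nm=e^{c(\eps)N}$ with $c(\eps)\to0$. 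Each difference $H_N(\sigma)-H_N(\sigma')$ has variance below $2\eps N$, so $\e\max|H_N(\sigma)-H_N(\sigma')|=O(\sqrt\eps\,N)$. Hence $F_{N,\beta}\le F^\bal_{N,\beta}+c(\eps)+O(\beta\sqrt\eps)+o(1)$, which removes the reliance on \cite{BS24} altogether.
\end{itemize}

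The paper avoids the issue entirely by working at a fixed rational $d$ with $|d_1-\frac12|>\eps$. Jensen gives
\[
F^d_{N,\beta}-F_{N,\beta}=N^{-1}\e\log G_{N,\beta}(d(\sigma)=d)\le-\eps^2+o(1).
\]
The variational identity \eqref{eq: summary of Pan and BS}, together with continuity, then forces the maximizer to $d=2^{-1}\1$. Your introduction mentions this identity, but Step~4 never uses it. Using it makes the near/far decomposition, the Cauchy--Schwarz bound on the bad event, and the concentration over $N+1$ constraints all unnecessary.
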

 

\paragraph{Open problems.} 
With the above  results, we list a few  related questions.
\begin{enumerate}
    
    \item Does color symmetry break at zero temperature for all $\kappa \ge 3$? 
    Noticing that $\GSE_{N,\kappa}^\bal\le \GSE_{N,\kappa}\le \GSE_{N,\kappa+1}$ due to the trivial inclusions $\Sigma^\bal_\kappa \subseteq \Sigma_\kappa\subseteq \Sigma_{\kappa+1}$, this would follow if one could show the following strict inequality, \[\lim_{N\to\infty}\GSE^\bal_{N,\kappa} > \lim_{N\to\infty}\GSE^\bal_{N,\kappa+1}\quad \text{for} \quad \kappa \ge 2,\] which we conjecture to be true.
   We refer the reader to Appendix~\ref{sec: color symmetry breaking at zero temp} for a proof of color symmetry breaking at zero temperature for the restricted range $\kappa \ge 56$.

    \item For $\kappa\ge3,$ is there a critical temperature  $\hat \beta_{\kappa}\in (0,\infty]$ of color symmetry breaking, where color symmetry is preserved for $\beta\le \hat \beta_{\kappa}$ and broken for $\beta>\hat \beta_{\kappa}$?  
    If so,  Theorem~\ref{thm: high temp} provides a lower bound  $\hat\beta_{\kappa} \ge \beta_\kappa$.

    \item For $\kappa\ge 3$, find the maximizers  $d\in\mathcal D_\kappa$, possibly depending on $\beta$, that achieves the supremum in \eqref{eq: summary of Pan and BS}.
    Is the maximizer unique modulo the permutation symmetry of colors?


\end{enumerate}

\subsection{Proof sketch}

For $\kappa \ge 3$, the balanced free energy is analyzed via the second moment method. The centering of the Hamiltonian is critical to this approach; without such an adjustment, the second moment method is inherently bound to fail, as demonstrated in Appendix~\ref{sec: necessity}. 
Most of the work lies in controlling the ratio \eqref{eq: exponential moment of balanced}, which compares the second moment of the centered and balanced partition function to the square of its first moment. To manage this, the ratio is decomposed into two distinct components: the first is addressed via a local expansion of the Kullback–Leibler divergence (see Lemma~\ref{lem: local KL divergence}), while the second is controlled by applying large deviation results from the non-disordered Potts model \cite{EW90} (see Lemma~\ref{lem: exponent gap in high termperature}). 
This framework allows us to establish the limiting balanced free energy as in Proposition~\ref{prop: high temperature of balanced}, and it turns out to coincide with the annealed limiting free energy of the unconstrained model from \cite{EW90}, thereby proving Theorem~\ref{thm: high temp}.

For $\kappa=2$,  the Potts spin glass model reduces to the SK model by a simple change of variable $\1\{\tau_i=\tau_j\}=2^{-1}(\tau_i\tau_j+1)$  for $\tau\in\{-1,+1\}^N$, where Jagannath--Sen \cite[Appendix C]{JS21} established color symmetry at zero temperature $\beta=\infty$.
Their method can be extended to  $\beta<\infty$, which together with some additional properties of the Parisi formula would imply Proposition~\ref{prop: kappa=2}.
Nevertheless, we present an elementary proof without the Parisi formula by directly controlling the multi-spin correlations via the gauge symmetry of the SK model, namely, the invariance of the distribution of the Hamiltonian under the map $\tau \mapsto (a_i\tau_i)_{i\in[N]}$ for any $a,\tau \in\{-1,+1\}^N$ (see Lemma~\ref{lem: multi-spin correlation}).


\paragraph{Related work.}

Chen \cite{Che25} demonstrated that color symmetry is preserved when a sufficiently strong antiferromagnetic interaction is integrated into the Hamiltonian. 
Relatedly, Chen \cite{Che24_Potts} established that this ``self-overlap correction'' simplifies the Parisi formula to the one equivalent to the balanced Potts spin glass \cite{BS24}, which was subsequently generalized to permutation-invariant models by Issa \cite{Iss24}.

The connection between the SK model and the Max-cut problem, initially explored in \cite{DMS17}, was generalized by Sen \cite{Sen18} who established that the balanced ground state energy of the Potts spin glass serves as the leading-order correction term for the Max $\kappa$-cut problem on sparse random graphs.
This relationship  was later extended to inhomogeneous sparse random graphs in \cite{JKS18}.

Finally, while gauge symmetry has long been known in the physics literature (cf. \cite[Chapter 4]{Nis01_book}), it has recently been rediscovered as a key component in the proof of the disorder chaos phenomenon in the Edwards–Anderson model \cite{Chatterjee} and diluted spin glass models \cite{CKS24}.





\begin{ack}
    The author thanks Wei-Kuo Chen and Arnab Sen for valuable comments.
    This work was partially supported by NSF grant
DMS-2246715.
\end{ack}

\section{Proof of Theorem~\ref{thm: high temp}}\label{sec: proof of main theorem}
Fix $\kappa\ge 3$ throughout this section.
As aforementioned in the proof sketch, we consider the centered Hamiltonian  \begin{equation}\label{eq: centered Hamiltonian}
    H_{N,\kappa}(\sigma) \colonequals \frac{1}{\sqrt{N}}\sum_{i,j\in[N]}g_{ij} (\sigma^\intercal_i \sigma_j - \kappa^{-1})=\frac{1}{\sqrt{N}}\sum_{i,j\in[N]}g_{ij} \sigma^\intercal_i P_\kappa \sigma_j, 
\end{equation} where \[P_\kappa\colonequals I_\kappa -\kappa^{-1}\1\1^\intercal.\]
One can check  that $P_\kappa^2=P_\kappa=P^\intercal_\kappa$ and  it is indeed a orthogonal projection matrix onto the orthogonal complement of the one-dimensional subspace generated by $\1$.
Denote the  partition function and the balanced partition function corresponding to the centered Hamiltonian \eqref{eq: centered Hamiltonian} by $Z_{N,\beta,\kappa}$ and $Z^\bal_{N,\beta,\kappa}$, respectively.
Since  $H_{N,\kappa}(\sigma)-H_N(\sigma)$ is a mean-zero constant  not depending on $\sigma$, we may use these centered Hamiltonian instead of  the original one \eqref{eq: Hamiltonian} to calculate the free energies, that is, \begin{equation}\label{eq: centering does not change the balanced free energy}
     F_{N,\beta}=\frac{1}{N}\e\log Z_{N,\beta,\kappa} \quad \text{and}\quad F^\bal_{N,\beta}= \frac{1}{N}\e\log Z^\bal_{N,\beta,\kappa}.
\end{equation}
Our main result of this section is that for the centered Hamiltonian \eqref{eq: centered Hamiltonian},  the limiting balanced free energy is equal to its annealed counterpart in a high-temperature regime.
\begin{proposition}\label{prop: high temperature of balanced}
    Let $\kappa\ge 3$. 
    For $ 0\le \beta<  (\kappa-2)^{-1/2}\sqrt{\kappa(\kappa-1)\log(\kappa-1)} $, we have \[ \lim_{N\to\infty} \frac{1}{N}\e\log Z^\bal_{N,\beta,\kappa} = \lim_{N\to\infty}\log \e Z^\bal_{N,\beta,\kappa}= \log \kappa +\frac{\beta^2(\kappa-1)}{2\kappa^2}.\]
\end{proposition}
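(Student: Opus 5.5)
We prove Proposition~\ref{prop: high temperature of balanced} by the second moment method for $Z^\bal_{N,\beta,\kappa}$, combined with Gaussian concentration.

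\textbf{First moment.} For balanced $\sigma$, the vector $P_\kappa\sigma_j$ is $e_a-\kappa^{-1}\1$. The variance of the centered Hamiltonian is
\[
\e H_{N,\kappa}(\sigma)^2=\frac1N\sum_{i,j}\bigl(\sigma_i^\intercal P_\kappa\sigma_j\bigr)^2 .
\]
The quantity $\sigma_i^\intercal P_\kappa\sigma_j$ equals $1-\kappa^{-1}$ when $\sigma_i=\sigma_j$ and $-\kappa^{-1}$ otherwise. Balancedness means each value of $\sigma_i$ is shared by exactly $N/\kappa$ indices $j$, so
\[
\e H_{N,\kappa}(\sigma)^2=\frac1N\cdot N\Bigl(\frac N\kappa(1-\kappa^{-1})^2+\frac{N(\kappa-1)}{\kappa}\kappa^{-2}\Bigr)=\frac{N(\kappa-1)}{\kappa^2}.
\]
Hence $\e Z^\bal_{N,\beta,\kappa}=|\Sigma^\bal_\kappa|\exp\bigl(\beta^2N(\kappa-1)/(2\kappa^2)\bigr)$. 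Stirling gives $N^{-1}\log|\Sigma^\bal_\kappa|\to\log\kappa$. This identifies the annealed limit (the middle term should be read with the factor $N^{-1}$). Jensen then yields the upper bound $N^{-1}\e\log Z^\bal\le N^{-1}\log\e Z^\bal$.

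\textbf{Second moment.} For a pair $\sigma,\sigma'$ of balanced configurations, define the overlap matrix $R=N^{-1}\sigma\sigma'^\intercal$. This is a $\kappa\times\kappa$ matrix with row and column sums $1/\kappa$. The covariance of the Hamiltonian is
\[
\e H(\sigma)H(\sigma')=N^{-1}\sum_{i,j}\bigl(\sigma_i^\intercal P\sigma_j\bigr)\bigl(\sigma_i'^\intercal P\sigma_j'\bigr)=N\,\mathrm{tr}\bigl(PRPR^\intercal\bigr)=N\|PRP\|_F^2 .
\]
Since $R$ has constant margins, $PRP=R-\kappa^{-2}\1\1^\intercal$, so the covariance equals $N\|R-\kappa^{-2}\1\1^\intercal\|_F^2$. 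Therefore
\[
\frac{\e (Z^\bal)^2}{(\e Z^\bal)^2}=\frac{1}{|\Sigma^\bal_\kappa|^2}\sum_{\sigma,\sigma'}\exp\bigl(\beta^2N\|R-\kappa^{-2}\1\1^\intercal\|_F^2\bigr).
\]
Grouping the pairs by $R$, the number of pairs with a given $R$ is $\exp\bigl(N(H(R)-2\log\kappa)+O(\log N)\bigr)$ times $|\Sigma^\bal_\kappa|^2$. Here $H(R)-2\log\kappa=-D(R\,\|\,\kappa^{-2}\1\1^\intercal)$ is minus a KL divergence over doubly-stochastic-type matrices with uniform margins. We need the exponent
\[
\Phi(R)=\beta^2\|R-\kappa^{-2}\1\1^\intercal\|_F^2-D(R)
\]
to satisfy $\Phi\le0$, with equality only at $R=\kappa^{-2}\1\1^\intercal$. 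Near that point we use a local expansion. Writing $R=\kappa^{-2}(\1\1^\intercal+M)$ with $M$ having zero margins, we get $D\approx\frac{1}{2\kappa^2}\|M\|_F^2$ and $\beta^2\|\cdot\|_F^2=\beta^2\kappa^{-4}\|M\|_F^2$. The Hessian is therefore negative definite as soon as $\beta^2<\kappa^2/2$. The Gaussian (Laplace) sum over the $(\kappa-1)^2$ free directions then gives a bounded ratio, $\e(Z^\bal)^2\le C(\e Z^\bal)^2$.

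\textbf{Main obstacle: the global bound.} Away from the center we must show $\sup_{R}\Phi(R)<0$. This is exactly a large deviation/variational problem for a non-disordered $\kappa^2$-state (mean-field Curie--Weiss) Potts model with a quadratic interaction. The plan is to reduce it to the Ellis--Wang computation \cite{EW90}, whose critical point for the $q$-state Curie--Weiss Potts model is $\beta_c=2\frac{q-1}{q-2}\log(q-1)$. I would apply it row by row: for fixed row distribution, bound the contribution of each row of $\kappa R$ (a probability vector on $[\kappa]$) by the single-site Curie--Weiss functional with $q=\kappa$ and effective coupling $\beta^2/\kappa$. The condition $\beta^2/\kappa\cdot 2\cdot(\ldots)<\beta_c$ should translate into the stated threshold $\beta^2<\kappa(\kappa-1)\log(\kappa-1)/(\kappa-2)$. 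Verifying that this threshold dominates the local one ($\kappa^2/2$), and that the decoupling across rows loses nothing, is the step I expect to be delicate.

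\textbf{Conclusion.} With the ratio bounded, Paley--Zygmund gives $\p\bigl(Z^\bal\ge\frac12\e Z^\bal\bigr)\ge c>0$. Gaussian concentration of $N^{-1}\log Z^\bal$ (the Lipschitz constant in $g$ is $O(\beta N^{-1/2})$) upgrades this to $N^{-1}\e\log Z^\bal\ge N^{-1}\log\e Z^\bal-o(1)$. Combined with the Jensen upper bound, this proves the proposition.
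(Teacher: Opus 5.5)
Your proposal is correct and is essentially the paper's proof: the same centered balanced second moment $\la\exp(\beta^2N\|R-\kappa^{-2}\1\1^\intercal\|_F^2)\ra_\bal$, a local KL expansion near $\kappa^{-2}\1\1^\intercal$ (which needs $\beta^2<\kappa^2/2$), a global gap from \cite{EW90} applied row by row with $q=\kappa$ and Ellis--Wang coupling $2\beta^2/\kappa$, and then Paley--Zygmund plus Gaussian concentration. The step you flag as delicate is routine, for four reasons:
\begin{itemize}
\item With $v=\kappa R$, the objective $D(R\|\kappa^{-2}\1\1^\intercal)-\beta^2\|R\|_F^2$ equals $\kappa^{-1}\sum_a\bigl(\sum_b v_{ab}\log(\kappa v_{ab})-\kappa^{-1}\beta^2\sum_b v_{ab}^2\bigr)$ exactly, and dropping the column constraints can only lower the infimum.
\item A pigeonhole argument gives a row of $R$ at squared distance at least $\delta/\kappa$ from $\kappa^{-2}\1$, so uniqueness of the Ellis--Wang minimizer plus compactness yields a uniform gap.
\item The inequality $2(\kappa-1)\log(\kappa-1)<\kappa(\kappa-2)$ shows that the global threshold is the binding one.
\item In the local sum, keep Stirling's exact prefactor $N^{-(\kappa-1)^2/2}$ rather than a crude $O(\log N)$ error, so that it cancels the lattice-point count.
\end{itemize}
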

Before proving  this, we demonstrate that Proposition~\ref{prop: high temperature of balanced} readily implies Theorem~\ref{thm: high temp}.

\begin{proof}[\bf Proof of Theorem~\ref{thm: high temp}]
Recall $\beta_\kappa$ in \eqref{eq: high temperature threshold} and let $\beta <\beta_\kappa$.
Notice that \begin{equation}
        \e Z_{N,\beta,\kappa} = \sum_{\sigma \in \Sigma_\kappa} e^{ (2N)^{-1}\beta^2 \sum_{i,j \in [N]} (\1\{\sigma_i=\sigma_j\}-\kappa^{-1})^2}=e^{(2\kappa^2)^{-1}\beta^2N}\sum_{\sigma \in \Sigma_\kappa} e^{ (2N)^{-1}\beta^2 a_\kappa \sum_{i,j \in [N]}\1\{\sigma_i=\sigma_j\}}, \label{eq: ferro Potts}
\end{equation} 
where we used the relation, for $a_\kappa \colonequals 1-2\kappa^{-1}$, 
\begin{equation*}
    (\1\{\sigma_i=\sigma_j\} - \kappa^{-1})^2= a_\kappa\1\{\sigma_i=\sigma_j\}+\kappa^{-2}. 
\end{equation*} 
The right-hand side of \eqref{eq: ferro Potts} corresponds to a non-disordered ferromagnetic Potts model at inverse temperature $\beta^2 a_\kappa$ considered in \cite{EW90}. 
Since $\beta$ satisfies \[\beta^2a_\kappa  < \frac{2(\kappa -1)}{\kappa-2}\log (\kappa -1), \; \text{that is,} \; 0\le \beta < \frac{1}{\kappa-2} \sqrt{2\kappa(\kappa-1)\log (\kappa-1)},\]  \cite[Theorem 2.1]{EW90} and \eqref{eq: ferro Potts} imply  \begin{equation*}
    \lim_{N\to\infty}\frac{1}{N}\log \e  Z_{N,\beta,\kappa}=\frac{\beta^2}{2\kappa^2}+\log \kappa +\frac{\beta^2a_\kappa}{2\kappa}= \log \kappa +\frac{\beta^2(\kappa-1)}{2\kappa^2}.
\end{equation*}
From Jensen's inequality and \eqref{eq: centering does not change the balanced free energy}, we have $ F_{N,\beta}\le N^{-1}\log \e Z_{N,\beta,\kappa}$, which  together with the last display show \begin{equation*}
    \lim_{N\to\infty} F_{N,\beta}\le \log \kappa +\frac{\beta^2(\kappa-1)}{2 \kappa^2}.
\end{equation*}
From a trivial inequality $ F^\bal_{N,\beta}\le  F_{N,\beta}$, Proposition~\ref{prop: high temperature of balanced} and \eqref{eq: centering does not change the balanced free energy} imply the announced result \[\lim_{N\to\infty} F^\bal_{N,\beta}=\lim_{N\to\infty} F_{N,\beta}=\log \kappa +\frac{\beta^2(\kappa-1)}{2 \kappa^2}.\]

\end{proof}

The rest of the section is devoted to the proof of Proposition~\ref{prop: high temperature of balanced}, for which we utilize the second moment method.
Define the $\kappa$-by-$\kappa$ overlap matrix by \[R(\sigma,\tau) = \frac{1}{N}\sigma \tau^\intercal \quad \text{for}\quad \sigma,\tau \in \Sigma_\kappa. \]
The entries of the overlap matrix is given by \begin{equation}\label{eq: entries of the overlap}
    NR_{ab}(\sigma,\tau)=(\sigma \tau^\intercal)_{ab}=\sum_{i\in[N]}\sigma_{a i}\tau_{bi}=\sum_{i\in [N]}\1\{\sigma_i=a, \tau_i=b\}, \quad a,b\in[\kappa].
\end{equation}
Let $\mathbb Z_{\ge 0} \colonequals \mathbb N\cup \{0\}$. 
For balanced configurations  $\sigma,\tau \in \Sigma^\bal_\kappa$, it can be seen from \eqref{eq: entries of the overlap} that the overlap matrix $R(\sigma,\tau)$ belongs to  the collection of admissible matrices\footnote{$\mathcal A^\bal_{\kappa}$ lies in a $(\kappa^2- 2\kappa+1)$-dimensional affine Euclidean subspace, and indeed, in a dilation of the Birkhoff polytope.} \begin{equation*}
    \mathcal A^\bal_{\kappa}\colonequals \Bigl\{(r_{ab})_{a,b \in [\kappa]}\in (N^{-1}\mathbb Z_{\ge 0})^{\kappa \otimes \kappa}: \sum_{a\in [\kappa]}r_{ab}=\sum_{b\in [\kappa]}r_{ab}=\kappa^{-1} \Bigr\}. \label{eq: admissible matrix}
\end{equation*}
Denote the unit vectors of $\mathbb R^N$ by $(e_i)_{i\in[N]}$.
By a slight abuse of notation, we also denote the unit vectors in  $\mathbb R^\kappa$ by $(e_a)_{a\in[\kappa]}$.
The overlap matrix is closely related to the covariance of the Hamiltonian \eqref{eq: Hamiltonian}.
Let us denote the Frobenius norm of a square matrix by $\|\cdot \|_F$.
\begin{lemma}\label{eq: covariance}
    For $\sigma,\tau \in \Sigma_{\kappa}$, we have \[\e  H_N(\sigma)  H_N(\tau)= N \|  R(\sigma,\tau) \|_F^2. \]
\end{lemma}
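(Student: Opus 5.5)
The plan is a direct computation from the definition of $H_N$ in \eqref{eq: Hamiltonian}, using only independence of the Gaussian disorder. First, I expand the product of the two Hamiltonians as a double sum over index pairs:
\[
\e H_N(\sigma)H_N(\tau) = \frac{1}{N}\sum_{i,j,k,l\in[N]} \e[g_{ij}g_{kl}]\,\1\{\sigma_i=\sigma_j\}\,\1\{\tau_k=\tau_l\}.
\]
Since the $(g_{ij})_{i,j\in[N]}$ are i.i.d. standard normal over all ordered pairs (diagonal included), $\e[g_{ij}g_{kl}]=\1\{(i,j)=(k,l)\}$. The sum therefore collapses to
\[
\frac{1}{N}\sum_{i,j\in[N]}\1\{\sigma_i=\sigma_j\}\,\1\{\tau_i=\tau_j\}.
\]

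Next, I rewrite the indicator product in matrix form. With the identification $\sigma_i=e_a$, we have
\[
\1\{\sigma_i=\sigma_j\}\1\{\tau_i=\tau_j\} = (\sigma_i^\intercal\sigma_j)(\tau_i^\intercal\tau_j) = \sum_{a,b\in[\kappa]}\sigma_{ai}\sigma_{aj}\tau_{bi}\tau_{bj}.
\]
Summing over $i$ and $j$ and interchanging the order of summation gives
\[
\sum_{a,b\in[\kappa]}\Bigl(\sum_{i}\sigma_{ai}\tau_{bi}\Bigr)^2 = \sum_{a,b\in[\kappa]}\bigl(NR_{ab}(\sigma,\tau)\bigr)^2,
\]
where the last step uses \eqref{eq: entries of the overlap}. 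Dividing by $N$ yields $N\|R(\sigma,\tau)\|_F^2$, which is the claim.

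There is no real obstacle here. The only point to check is that the sum in \eqref{eq: Hamiltonian} runs over all ordered pairs $(i,j)$ with independent $g_{ij}$ and $g_{ji}$, so that no symmetrization factor of $2$ appears in the covariance.
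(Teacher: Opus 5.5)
Your proposal is correct and follows essentially the same route as the paper: collapse the double sum using $\e[g_{ij}g_{kl}]=\1\{(i,j)=(k,l)\}$, expand $(\sigma_i^\intercal\sigma_j)(\tau_i^\intercal\tau_j)$ over colors $a,b$, and sum over sites to get $\sum_{a,b}(NR_{ab}(\sigma,\tau))^2$. The paper writes the same computation in matrix form, via resolutions of the identity and a trace, rather than in coordinates.
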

\begin{proof}

The covariance can be calculated as \begin{equation*}
    N\e (H_N(\sigma) H_N(\tau)) = \sum_{i,j \in [N]}e_i^\intercal \sigma^\intercal \sigma e_j e_i^\intercal \tau^\intercal \tau e_j
     =\sum_{i,j \in [N]}\sum_{a,b\in[\kappa]}(e_i^\intercal \sigma^\intercal e_a) (e_a^\intercal \sigma e_j) (e_i^\intercal \tau^\intercal e_b) (e_b^\intercal \tau e_j),
\end{equation*} where we used the resolution of identity $\sum_{a\in[\kappa]}e_a e_a^\intercal=I_\kappa.$
Notice the scalar quantities  $e_a^\intercal \sigma e_i =e_i^\intercal \sigma^\intercal e_a$ and $e^\intercal_b\tau e_j= e^\intercal_j \tau^\intercal e_b$.
We can then write the preceding display as \begin{equation*}
    \sum_{a,b\in[\kappa]} \sum_{i,j \in [N]}(e_a^\intercal \sigma e_i) (e_i^\intercal \tau^\intercal e_b) (e_a^\intercal \sigma e_j)  (e_j^\intercal \tau^\intercal e_b) = \sum_{a,b\in[\kappa]} e^\intercal_a\sigma \tau^\intercal e_b e^\intercal_a \sigma \tau^\intercal e_b
    =\sum_{a,b\in[\kappa]} e^\intercal_a\sigma \tau^\intercal e_b   e^\intercal_b \tau \sigma^\intercal e_a,
\end{equation*} again using the resolution of identity $\sum_{i\in[N]}e_ie^\intercal_i=I_N$.
Finally, noticing another resolution of identity $\sum_{b\in[\kappa]}e_be^\intercal_b=I_\kappa$, we arrive at the announced result \[ \sum_{a\in [\kappa]}e^\intercal_a\sigma \tau^\intercal \tau \sigma^\intercal e_a
    = \operatorname{Tr} (\sigma \tau^\intercal \tau \sigma^\intercal) =N^2 \operatorname{Tr} (R(\sigma,\tau) R(\sigma,\tau)^\intercal) =N^2\|R(\sigma,\tau)\|_F^2.\]

\end{proof}
A similar calculation can be done for the centered Hamiltonian \eqref{eq: centered Hamiltonian}.
\begin{lemma}\label{lem: centered covariance}
    For $\sigma,\tau \in \Sigma_{\kappa}$, we have \[\e  H_{N,\kappa}(\sigma)  H_{N,\kappa}(\tau)= N \| P_\kappa R(\sigma,\tau) P_\kappa\|_F^2. \]
    For balanced configurations $\sigma,\tau\in\Sigma^\bal_\kappa$, we have \begin{equation}
        P_{\kappa}R(\sigma,\tau)P_{\kappa}
    =R(\sigma,\tau)- \kappa^{-2}\1\1^\intercal. \label{eq: balanced overlap calculation}
    \end{equation}    
\end{lemma}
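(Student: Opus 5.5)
The plan is to reduce the first claim to Lemma~\ref{eq: covariance} by rewriting the centered Hamiltonian in the same form as the original one, with $\sigma$ replaced by the matrix $P_\kappa\sigma$. The second claim is then a direct computation using the row and column sums of $R(\sigma,\tau)$ for balanced configurations.

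For the covariance, write $H_{N,\kappa}(\sigma)=N^{-1/2}\sum_{i,j}g_{ij}(P_\kappa\sigma e_i)^\intercal(P_\kappa\sigma e_j)$. This uses $P_\kappa=P_\kappa^\intercal P_\kappa$, which holds because $P_\kappa$ is symmetric and idempotent. The computation in the proof of Lemma~\ref{eq: covariance} never uses that the columns of $\sigma$ are standard basis vectors. It only uses the resolutions of identity and the independence of the $g_{ij}$, so that $\e g_{ij}g_{kl}=\1\{(i,j)=(k,l)\}$. Repeating it verbatim with $\sigma,\tau$ replaced by the $\kappa\times N$ matrices $\tilde\sigma=P_\kappa\sigma$ and $\tilde\tau=P_\kappa\tau$ gives
\[
\e H_{N,\kappa}(\sigma)H_{N,\kappa}(\tau)=N^{-1}\operatorname{Tr}\bigl(\tilde\sigma\tilde\tau^\intercal\tilde\tau\tilde\sigma^\intercal\bigr)=N^{-1}\|\tilde\sigma\tilde\tau^\intercal\|_F^2 .
\]
Since $\tilde\sigma\tilde\tau^\intercal=P_\kappa\sigma\tau^\intercal P_\kappa^\intercal=N P_\kappa R(\sigma,\tau)P_\kappa$, this equals $N\|P_\kappa R(\sigma,\tau)P_\kappa\|_F^2$. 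Equivalently, one can compute directly that $\e H_{N,\kappa}(\sigma)H_{N,\kappa}(\tau)=N^{-1}\sum_{i,j}(\sigma_i^\intercal P_\kappa\sigma_j)(\tau_i^\intercal P_\kappa\tau_j)$ and rearrange this as a trace.

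For \eqref{eq: balanced overlap calculation}, expand
\[
P_\kappa RP_\kappa=R-\kappa^{-1}\1\1^\intercal R-\kappa^{-1}R\1\1^\intercal+\kappa^{-2}\1(\1^\intercal R\1)\1^\intercal .
\]
When $\sigma,\tau\in\Sigma^\bal_\kappa$, we have $R\in\mathcal A^\bal_\kappa$, so $R\1=\kappa^{-1}\1$, $\1^\intercal R=\kappa^{-1}\1^\intercal$, and $\1^\intercal R\1=1$. Substituting, the three correction terms become $-\kappa^{-2}\1\1^\intercal$, $-\kappa^{-2}\1\1^\intercal$ and $+\kappa^{-2}\1\1^\intercal$, which sum to $-\kappa^{-2}\1\1^\intercal$.

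The only step that needs care is checking that the argument of Lemma~\ref{eq: covariance} carries over to general matrices in place of $\sigma,\tau$. It does, because that argument is pure linear algebra.
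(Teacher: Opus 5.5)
Your proposal is correct and matches the paper's proof. The paper also writes $H_{N,\kappa}(\sigma)=H_N(P_\kappa\sigma)$ and invokes Lemma~\ref{eq: covariance}, then expands $P_\kappa R P_\kappa$ using the balanced row and column sums. What you add is explicit justification the paper leaves tacit: that the covariance computation is pure linear algebra and so applies to the non-configuration matrix $P_\kappa\sigma$, and that $\1^\intercal R=\kappa^{-1}\1^\intercal$ and $\1^\intercal R\1=1$.
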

\begin{proof}
Since $P_\kappa^2=P_\kappa$ as a projection, we can write $H_{N,\kappa}(\sigma)= H_N(P_\kappa \sigma)$  for $\sigma\in \Sigma_\kappa$.
Then  Lemma~\ref{eq: covariance} implies $\e  H_{N,\kappa}(\sigma)  H_{N,\kappa}(\tau)= N^{-1}\| P_\kappa \sigma \tau^\intercal P_\kappa\|_F^2=N \| P_\kappa R(\sigma,\tau) P_\kappa\|_F^2$.
Next, notice that  $\sigma,\tau\in\Sigma^\bal_\kappa$ implies   $R(\sigma,\tau)\1=\kappa^{-1}\1$ and hence  \begin{equation*}
    P_{\kappa}R(\sigma,\tau)P_{\kappa}= R -\kappa^{-1} \1 \1^\intercal R(\sigma,\tau) - \kappa^{-1} R(\sigma,\tau)\1 \1^\intercal + \kappa^{-2}\1\1^\intercal R(\sigma,\tau)\1 \1^\intercal \nonumber
    =R(\sigma,\tau)- \kappa^{-2}\1\1^\intercal.
\end{equation*}  

\end{proof}

From Lemma~\ref{lem: centered covariance}, the variance $\e H_{N,\kappa}(\sigma)^2=N\kappa^{-2}\|P_\kappa\|_F^2$ is constant in $\sigma \in \Sigma^\bal_\kappa$ and, hence,
\begin{align}
    \e Z^\bal_{N,\beta,\kappa} &=\sum_{\sigma \in \Sigma^\bal_\kappa } e^{2^{-1}\beta^2\e H_{N,\kappa}(\sigma)^2}= e^{\beta^2 N \kappa^{-2}\|P_\kappa\|_F^2/2}\cdot |\Sigma_\kappa^\bal|  \label{eq: first moment rewrite} \quad \text{and} 
    \\ \e ( Z^\bal_{N,\beta,\kappa}) ^2 &= \sum_{\sigma,\tau \in \Sigma_\kappa^\bal}  e^{2^{-1}\beta^2 (H_{N,\kappa}(\sigma)+H_{N,\kappa}(\tau))^2} 
    = e^{\beta^2 N \kappa^{-2}\|P_\kappa\|_F^2}\sum_{\sigma,\tau \in \Sigma_\kappa^\bal}  e^{\beta^2 N\|P_\kappa R(\sigma,\tau)P_\kappa\|_F^2}.  \nonumber
\end{align}
The prefactors cancel to yield \begin{equation}\label{eq: exponential moment of balanced}
    \frac{\e ( Z^\bal_{N,\beta,\kappa}) ^2 }{(\e  Z^\bal_{N,\beta,\kappa}) ^2 } = \bigl\la \exp\bigl( \beta^2 N \|P_\kappa R(\sigma,\tau)P_\kappa\|_F^2\bigr) 
    \bigr\ra_\bal= \bigl\la \exp\bigl( \beta^2 N \|R(\sigma,\tau)-\kappa^{-2}\1\1^\intercal\|_F^2\bigr) 
    \bigr\ra_\bal,
\end{equation} where $\la \cdot \ra_\bal$ denotes the uniform average over $\Sigma_\kappa^\bal \times \Sigma_\kappa^\bal$, and the last equality  follows \eqref{eq: balanced overlap calculation}.
The next lemma shows that \eqref{eq: exponential moment of balanced} is bounded in $N$, which is the key component in the second moment method.
\begin{lemma}\label{lem: second moment ratio bounded}
Under the assumption of Proposition~\ref{prop: high temperature of balanced}, we have \[ \sup_{N\in \mathbb N} \frac{\e ( Z^\bal_{N,\beta,\kappa}) ^2 }{(\e  Z^\bal_{N,\beta,\kappa}) ^2 } <\infty.\]   
\end{lemma}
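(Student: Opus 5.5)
The plan is to write the ratio \eqref{eq: exponential moment of balanced} as a sum over admissible overlap matrices and control it by a Laplace-type argument. The unique maximizer should be the uniform matrix $u\colonequals\kappa^{-2}\1\1^\intercal$. Fix $\sigma\in\Sigma^\bal_\kappa$; by symmetry the average over $\tau$ does not depend on $\sigma$. For $r\in\mathcal A^\bal_\kappa$, the number of $\tau\in\Sigma^\bal_\kappa$ with $R(\sigma,\tau)=r$ is $\prod_{a}\binom{N/\kappa}{(Nr_{ab})_{b}}$. Dividing by $|\Sigma^\bal_\kappa|=\binom{N}{(N/\kappa)_a}$ and applying Stirling's formula gives
\[
\frac{\e ( Z^\bal_{N,\beta,\kappa}) ^2 }{(\e  Z^\bal_{N,\beta,\kappa}) ^2 }=\sum_{r\in\mathcal A^\bal_\kappa} c_N(r)\, \exp\bigl(N\Phi(r)\bigr),\qquad \Phi(r)\colonequals\beta^2\|r-u\|_F^2-D(r\,\|\,u),
\]
where $D(r\|u)=\sum_{a,b}r_{ab}\log(\kappa^2 r_{ab})$ is the Kullback--Leibler divergence. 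The prefactor satisfies $c_N(r)\lesssim N^{-(\kappa-1)^2/2}$ uniformly near $u$, with all entries bounded away from $0$, and $c_N(r)\le \mathrm{poly}(N)$ everywhere. I then split the sum into the region $\|r-u\|_F\le\delta$ and its complement.

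\emph{Local part.} A Taylor expansion of the entropy shows that $D(r\|u)=\frac{\kappa^2}{2}\|r-u\|_F^2+O(\|r-u\|_F^3)$ on the affine space of $\mathcal A^\bal_\kappa$; this is the local KL expansion I would state as a lemma. The threshold in Proposition~\ref{prop: high temperature of balanced} forces $\beta^2<\kappa^2/2$ (for $\kappa=3$ this reads $6\log 2<9/2$, and the gap only grows with $\kappa$). Hence for $\delta$ small we get $\Phi(r)\le-\eta\|r-u\|_F^2$ for some $\eta>0$. The local sum is then dominated by $N^{-(\kappa-1)^2/2}\sum_{x}e^{-\eta N\|x\|^2}$, where $x$ ranges over a lattice of mesh $N^{-1}$ in a $(\kappa-1)^2$-dimensional subspace. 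This Riemann sum of a Gaussian integral is bounded uniformly in $N$.

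\emph{Global part (the main obstacle).} It remains to show $\sup_{\|r-u\|_F\ge\delta}\Phi(r)<0$; the polynomial prefactor is then absorbed by the exponential decay. The plan is to relax the column constraints and keep only the row constraints $\sum_b r_{ab}=\kappa^{-1}$. Because $\|r-u\|_F^2=\sum_{a,b}r_{ab}^2-\kappa^{-2}$ on $\mathcal A^\bal_\kappa$, the functional $\Phi$ separates over rows. Setting $p=\kappa(r_{ab})_{b}\in\mathcal D_\kappa$ for each row $a$, each row contributes $\kappa^{-1}$ times
\[
\psi(p)\colonequals\frac{\beta^2}{\kappa}\Bigl(\sum_b p_b^2-\frac1\kappa\Bigr)-\sum_b p_b\log(\kappa p_b).
\]
This is exactly the mean-field (Curie--Weiss) Potts free energy functional of \cite{EW90}. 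Their large-deviation analysis in \cite[Theorem 2.1]{EW90} shows that the uniform vector is the unique global maximizer of $\psi$, with $\psi=0$ there, precisely when the effective coupling is below $\frac{2(\kappa-1)}{\kappa-2}\log(\kappa-1)$. I expect this condition to translate, after matching normalizations, into $\beta<(\kappa-2)^{-1/2}\sqrt{\kappa(\kappa-1)\log(\kappa-1)}$. Then $\Phi(r)\le\kappa^{-1}\sum_a\psi(\kappa r_{a\cdot})<0$ unless every row is uniform, i.e.\ unless $r=u$, and compactness gives a uniform negative bound off the $\delta$-ball.

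The delicate point is checking that the constant in this reduction matches the stated threshold exactly. If the row relaxation turns out to be too lossy, the fallback is to apply the same EW90 bound to $\sum_a\psi$ while keeping the column constraints through a Lagrange multiplier argument.
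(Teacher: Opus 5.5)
Your proposal is correct and follows essentially the same route as the paper: the exact multinomial law of $R(\sigma,\tau)$ with Stirling asymptotics, a local quadratic expansion of $D(r\|\kappa^{-2}\1\1^\intercal)$ near the uniform matrix (which needs $\beta^2<\kappa^2/2$), and a row-by-row reduction to the Curie--Weiss Potts functional of \cite{EW90} away from it. Your worry about the row relaxation being lossy is unfounded, so the Lagrange-multiplier fallback is unnecessary: your $\psi$ corresponds to the EW90 model with coupling $2\beta^2/\kappa$, and their critical value $\frac{2(\kappa-1)}{\kappa-2}\log(\kappa-1)$ translates exactly into $\beta^2<\kappa(\kappa-1)\log(\kappa-1)/(\kappa-2)$.
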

\begin{proof}
    Consider the uniform probability measure $\p_{\sigma,\tau}$ on $\Sigma_\kappa^\bal \times \Sigma_\kappa^\bal$.
    Condition on a fixed $\sigma \in \Sigma_\kappa^\bal$ and denote the conditional probability measure  by $\p_\tau$.
The matrix $R(\sigma,\tau)$ is then a random $\kappa$-by-$\kappa$ matrix with a fixed row and columns sums $1/\kappa$, and its conditional distribution is given by \begin{equation*}
    \p_\tau ( R(\sigma,\tau)= (r_{ab})_{a,b\in [\kappa]} )= \bigl(\frac{N!}{  ((N/\kappa)!)^{\kappa}} \bigr)^{-1} \prod_{a\in [\kappa]}\frac{(N/\kappa)!}{\prod_{b\in [\kappa]}(Nr_{ab})!} \quad \text{for} \quad (r_{ab})_{a,b\in [\kappa]} \in \mathcal A^\bal_{\kappa}. \label{eq: balanced table probability}
\end{equation*} 
Using the Stirling's formula $\log n!= n\log n -n +2^{-1}\log n+O(1)$ for $n\in\mathbb N$, we obtain    \begin{equation}\label{eq: LDP}
    \log \p_\tau ( R(\sigma,\tau) = (r_{ab})_{a,b\in [\kappa]} )= - N\sum_{a,b\in[\kappa]} r_{ab} \log (\kappa^2 r_{ab}) -\frac{(\kappa-1)^2}{2} \log N -\frac{1}{2}\sum_{(a,b) \in I_r}\log r_{ab}   +O(1),
\end{equation}  where  \[I_r\colonequals \{(a,b)\in[\kappa]\times [\kappa]: r_{ab}\ne 0 \}.\]
Note that this holds uniformly in $\sigma.$
We can identify the first term on the right-hand side \eqref{eq: LDP} (without the prefactor $N$) as a relative entropy (Kullback--Leibler divergence) with respect to the uniform measure on $[\kappa]\times [\kappa]$, which we write as $D(r\|\kappa^{-2}\1\1^\intercal).$
Combining \eqref{eq: exponential moment of balanced} and \eqref{eq: LDP}, we see that 
\begin{equation} \label{eq: second moment ratio}
\begin{aligned}
   \frac{\e ( Z^\bal_{N,\beta,\kappa}) ^2 }{(\e  Z^\bal_{N,\beta,\kappa}) ^2 } 
=  \sum_{r\in\mathcal A^\bal_{\kappa}} N^{-(\kappa-1)^2/2} &\exp\Bigl( -N\bigl( D(r\|\kappa^{-2}\1\1^\intercal)-\beta^2\|r-\kappa^{-2}\1\1^\intercal\|_F^2\bigr) 
\\&\qquad -\frac{1}{2}\sum_{a,b\in I_r}\log r_{ab}   +O(1)\Bigr)
\end{aligned}
\end{equation}
The following is a local expansion of the Kullback--Leibler divergence whose reference measure  has strictly positive mass, and we defer its proof to Appendix~\ref{sec: deferred proofs} (see also \cite[Theorem 4.1]{Csi04_book}).
\begin{lemma}\label{lem: local KL divergence}
    Let $n\in\mathbb N$.  
    Let $p\colonequals (p_i)_{i\in [n]}$ and $q\colonequals(q_i)_{i\in[n]}$ be two discrete probability distributions on $[n]$.
    Suppose $\min_{1\le i\le n} q_i>0$ and $\max_{i\in[n]}|p-q|\le 2^{-1}\min_{i\in[n]}q_i.$
   We have
     \begin{equation*}
    \Bigl| D(p\|q) - \frac{1}{2} \sum_{i\in [n]}\frac{(p_i-q_i)^2}{q_i} \Bigr|\le  5 \bigl(\min_{i\in [n]} q_i\bigr)^{-2} \sum_{i\in[n]}|p_i-q_i|^3.
\end{equation*} 
\end{lemma}

Fix $\eps>0.$
From Lemma~\ref{lem: local KL divergence}, there exists $\delta'>0$ such that if $\|r-\kappa^{-2}\1\1^\intercal\|_F^2<\delta'$, \begin{equation}\label{eq: local KL divergence}
    D(r\|\kappa^{-2}\1\1^\intercal)  \ge (1-\eps) \frac{\kappa^2}{2}\| r-\kappa^{-2}\1\1^\intercal\|_F^2.
\end{equation}
Moreover, there exists $K=K(\kappa)<\infty$ such that for a small enough $\delta''=\delta''(\kappa)>0$, we have \begin{equation}\label{eq: good around uniform}
    \sum_{a,b \in [\kappa]}|\log r_{ab}|\le K \quad \text{whenever} \quad  \| r-\kappa^{-2}\1\1^\intercal\|_F^2< \delta''.
\end{equation} 
Take $\delta =2^{-1}\min\{\delta',\delta''\}$.
  From \eqref{eq: local KL divergence} and \eqref{eq: good around uniform},  we can bound \eqref{eq: second moment ratio} by separating it into two parts:
\begin{align}
& \sum_{\substack{r\in\mathcal A^\bal_{\kappa}, \\\|r-\kappa^{-2}\1\1^\intercal\|_F^2\le \delta}} N^{-(\kappa-1)^2/2} \exp\Bigl( - \frac{(1-\eps)\kappa^2- 2\beta^2}{2} N \| r-\kappa^{-2}\1\1^\intercal\|_F^2 
+K   +O(1)\Bigr) \label{eq: centered balanced upper bound}
\\&+ \sum_{\substack{r\in\mathcal A^\bal_{\kappa},\\\|r-\kappa^{-2}\1\1^\intercal\|_F^2\ge \delta}} N^{-(\kappa-1)^2/2} \exp\Bigl( -N\bigl( D(r\|\kappa^{-2}\1\1^\intercal)- \beta^2\|r-\kappa^{-2}\1\1^\intercal\|_F^2\bigr)
-\frac{1}{2}\sum_{a,b\in I_r}\log r_{ab}   +O(1)\Bigr).  \label{eq: centered balanced upper bound: 2}
\end{align} 
Moreover, we have the following volume estimate, for $l \in [N]$,   \begin{equation}
    \Bigl|\Bigl\{ (r_{ab})_{a,b\in [\kappa]}\in \mathcal A^\bal_{\kappa}:  \frac{l-1}{N} \le \| r-\kappa^{-2}\1\1^\intercal\|_F^2< \frac{l}{N}\Bigr\} \Bigr| \le C'   ( lN )^{(\kappa-1)^2/2}  \label{eq: volume estimate for balanced}
\end{equation}
for some constant $C'>0$ depending only on $\kappa$. 
To see this, note that any admissible matrix in \eqref{eq: volume estimate for balanced} is uniquely determined by its leading principal submatrix of order $\kappa - 1$ and  each entry in this submatrix can take at most $O(1) \sqrt{lN}$ many values.
As a result, \eqref{eq: centered balanced upper bound} is  bounded  by
\begin{equation}
     O(1)\sum_{1\le l\le \delta N}   l^{(\kappa-1)^2/2}\exp \Bigl( - \frac{(1-\eps)\kappa^2- 2\beta^2}{2} (l-1) +K  \Bigr). \label{eq: centered balanced upper bound: 3}
\end{equation}
If  $2\beta^2<(1-\eps)\kappa^2$, then  \eqref{eq: centered balanced upper bound: 3} is uniformly bounded in $N$ as it is dominated by a convergent series $\sum_{l\ge 1}l^{(\kappa-1)^2/2}e^{-cl}<\infty$ for some $c>0$.

It remains to  estimate \eqref{eq: centered balanced upper bound: 2}.
To this end,  note the following application of  \cite{EW90}, whose proof is deferred to Appendix~\ref{sec: deferred proofs}.
\begin{lemma}\label{lem: exponent gap in high termperature}
    For any $\delta>0$, there exists $\eta>0$ such that if \[\beta^2<\frac{\kappa (\kappa-1)\log(\kappa-1)}{\kappa-2},\] then \[\inf_{\substack{r\in\mathcal A^\bal_{\kappa}, \\\|r-\kappa^{-2}\1\1^\intercal\|_F^2\ge \delta}} \bigl( D(r\|\kappa^{-2}\1\1^\intercal)- \beta^2\|r-\kappa^{-2}\1\1^\intercal\|_F^2\bigr)>\eta.\]
\end{lemma}

If $\beta^2<\kappa(\kappa-1)\log(\kappa-1)/(\kappa-2)$,  Lemma~\ref{lem: exponent gap in high termperature} guarantees an $\eta>0$ such that \eqref{eq: centered balanced upper bound: 2} is bounded by, in view of \eqref{eq: volume estimate for balanced},
\begin{equation}
     O(1)\sum_{\delta N< l\le N}  l^{(\kappa-1)^2/2}\exp \bigl( -\eta N +\kappa^2\log N  \bigr).\label{eq: centered balanced upper bound: 4}
\end{equation} 
The limit superior of \eqref{eq: centered balanced upper bound: 4} in $N$ is zero as the sum over $l$ is at most polynomial in $N$. 
Combining the previous estimates \eqref{eq: centered balanced upper bound: 3} and \eqref{eq: centered balanced upper bound: 4}, since $\eps>0$ is arbitrary, we conclude that \[ \sup_{N\in \mathbb N} \frac{\e ( Z^\bal_{N,\beta,\kappa}) ^2 }{(\e  Z^\bal_{N,\beta,\kappa}) ^2 } <\infty\] whenever \begin{equation*}
     \beta^2< \min \Bigl\{ \frac{\kappa^2}{2},\frac{\kappa (\kappa-1)\log(\kappa-1)}{\kappa-2}  \Bigr\}=\frac{\kappa (\kappa-1)\log(\kappa-1)}{\kappa-2}.
\end{equation*} 
This finishes the proof of Lemma~\ref{lem: second moment ratio bounded}.
\end{proof}

We are now in a position to complete the proof of Proposition \ref{prop: high temperature of balanced}.
\begin{proof}[\bf Proof of Proposition~\ref{prop: high temperature of balanced}]
Fix $N\in\mathbb N$.
Lemma~\ref{lem: second moment ratio bounded} and the Paley--Zygmund inequality show that \begin{equation*}
   \p( Z^\bal_{N,\beta,\kappa } \ge 2^{-1}\e  Z^\bal_{N,\beta,\kappa})>c>0
\end{equation*} for some constant $c>0$ depending only on $\kappa$ and $\beta.$
On this event of positive probability, we have \[\frac{1}{N}\log Z^\bal_{N,\beta,\kappa } \ge -\frac{\log 2}{N}+ \frac{1}{N}\log \e  Z^\bal_{N,\beta,\kappa}. \]
On the other hand, from the concentration of Gaussian measures (see \cite[Theorem 1.2]{panchenkobook}), it holds that $N^{-1}\log  Z^\bal_{N,\beta,\kappa } \to  N^{-1} \e \log Z^\bal_{N,\beta,\kappa}$ in probability as $N\to\infty.$
Therefore, given $\eps>0$, there exists an event of positive probability such that for sufficiently large $N,$ \begin{equation}\label{eq: on a good event}
    \frac{1}{N}\log \e  Z^\bal_{N,\beta,\kappa}\ge \frac{1}{N}\e \log  Z^\bal_{N,\beta,\kappa} \ge -\eps -\frac{\log 2}{N} +\frac{1}{N}\log \e Z^\bal_{N,\beta,\kappa},
\end{equation} where the first inequality is due to Jensen.
Note that $\|P_\kappa\|_F^2=\kappa^{-2}(\kappa-1)$ and $\lim_{N\to\infty}N^{-1}\log |\Sigma^\bal_\kappa|=\log \kappa $.
 From \eqref{eq: first moment rewrite},  we can then calculate the limit of the last term as  \[\lim_{N\to\infty} \frac{1}{N}\log \e  Z^\bal_{N,\beta,\kappa} =\log \kappa +\frac{\beta^2}{2}\frac{\kappa-1}{\kappa^2}.\] 
Since every quantity in  \eqref{eq: on a good event} is deterministic, taking $N\to\infty$ and then $\eps\to0$ shows that \[\lim_{N\to\infty}\frac{1}{N} \e \log  Z^\bal_{N,\beta,\kappa}=\lim_{N\to\infty} \frac{1}{N}\log \e  Z^\bal_{N,\beta,\kappa}=\log \kappa +\frac{\beta^2}{2}\frac{\kappa-1}{\kappa^2}.\] 
This finishes the proof.

\end{proof}

\section{Proof of Proposition~\ref{prop: kappa=2}}
This section is devoted to the proof of color symmetry of the Potts model with two colors $\kappa=2$.
As mentioned in the proof sketch, our main tool will be the gauge symmetry of the SK model.
We first identify the SK model with the Potts model for $\kappa=2$ as follows.
For $\sigma\in [2]^N$, consider the change of variable \[\tau_j= 2\1\{\sigma_{j}=1\}-1 \quad \text{for} \quad j\in [N].\]  
Note that $\tau \in \{-1,+1\}^N$, and it is easy to check $\tau_i\tau_j =2 \1\{\sigma_i=\sigma_j\}-1.$
The SK Hamiltonian is then obtained by this change of variable \begin{equation}\label{eq: usual SK hamiltonian}
         H_N^{\text{SK}}(\tau)\colonequals \frac{1}{\sqrt{N}}\sum_{i,j \in  [N]}g_{ij}\tau_i\tau_j  = 2 H_N(\sigma)- \frac{1}{\sqrt{N}}\sum_{i,j\in  [N]}g_{ij}.
    \end{equation}
   For $\beta \in [0,\infty]$, denote the corresponding Gibbs measure  by $G_{N,\beta}^{\text{SK}}$, and its Gibbs average by $\la \cdot\ra_{N,\beta}^{\text{SK}}$.
    Since the last term  in \eqref{eq: usual SK hamiltonian}   does not affect the Gibbs measure, we have \begin{equation}\label{eq: equivalent Gibbs measure}
        G_{N,\beta}^{\text{SK}}=G_{N,2\beta} \quad \text{and}\quad \la\cdot\ra_{N,\beta}^{\text{SK}}=\la\cdot\ra_{N,2\beta}.
    \end{equation}

For a finite sequence  $(i_1,\dots i_m)$  in $[N]$, define the degree of an index $l$ as $d_l=| \{l' \in [m]: i_{l'}=i_l\}|$.
Gauge symmetry of the SK model controls the multi-spin correlation of the Potts model as follows.
\begin{lemma}\label{lem: multi-spin correlation}
    Let $\beta \in[0,\infty]$ and $N\in\mathbb N$. 
    For any sequence  $(i_1,\dots i_m)$  in $[N]$, if there exists $ l_0 \in [m]$ such that $d_{l_0}$ is odd, then $\e  \la \prod_{l=1}^m  \tau_{i_l}\ra_{N,\beta}=0$.
\end{lemma}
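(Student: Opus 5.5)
We prove this with the gauge symmetry of the SK Hamiltonian. Here $\tau$ denotes the $\pm1$ spin attached to $\sigma$ by the change of variable above. By \eqref{eq: equivalent Gibbs measure}, $\la \cdot\ra_{N,\beta}=\la\cdot\ra^{\text{SK}}_{N,\beta/2}$, so it suffices to work with the SK Gibbs measure at inverse temperature $\beta'=\beta/2\in[0,\infty]$.

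Fix $a\in\{-1,+1\}^N$ and define the gauge transform of the disorder by $g^a_{ij}=a_ia_jg_{ij}$. Since the $g_{ij}$ are i.i.d.\ symmetric Gaussians and $a_ia_j\in\{\pm1\}$, the array $(g^a_{ij})$ has the same law as $(g_{ij})$. Write $H^{\text{SK}}_N(\tau;g)$ to record the dependence on the disorder. Directly from \eqref{eq: usual SK hamiltonian},
\[
H^{\text{SK}}_N(\tau;g^a)=H^{\text{SK}}_N(a\tau;g), \qquad (a\tau)_i=a_i\tau_i .
\]
The map $\tau\mapsto a\tau$ is a bijection of $\{-1,+1\}^N$. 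Hence the Gibbs measure with disorder $g^a$ is the pushforward of the Gibbs measure with disorder $g$ under this map. For finite $\beta'$ this follows by reindexing the sums in the partition function and in the numerator. For $\beta'=\infty$, the bijection maps the set of maximizers onto the set of maximizers, so the uniform measures correspond as well.

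It follows that, for every realization of $g$,
\[
\Bigl\la \prod_{l=1}^m\tau_{i_l}\Bigr\ra_{g^a}=\Bigl\la \prod_{l=1}^m a_{i_l}\tau_{i_l}\Bigr\ra_{g}=\Bigl(\prod_{l=1}^m a_{i_l}\Bigr)\Bigl\la \prod_{l=1}^m\tau_{i_l}\Bigr\ra_{g}.
\]
Take expectations and use $g^a\overset{d}{=}g$; the left side then equals $\e\la\prod_l\tau_{i_l}\ra$. Now choose $a$ with $a_{i_{l_0}}=-1$ and $a_j=+1$ for all other $j$. Then $\prod_l a_{i_l}=(-1)^{d_{l_0}}=-1$, because $d_{l_0}$ is odd. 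So $\e\la\prod_l\tau_{i_l}\ra=-\e\la\prod_l\tau_{i_l}\ra$, which forces the expectation to vanish. The expectation is finite because the integrand is bounded by $1$.

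The only step that needs care is the $\beta=\infty$ case. There one must check that the Gibbs measure is still equivariant under the gauge map, which holds because maximizer sets are carried to maximizer sets bijectively. Everything else is bookkeeping.
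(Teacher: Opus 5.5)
Your proof is correct and follows essentially the same route as the paper: flip the spin at $i_{l_0}$, absorb the flip into a sign change of the couplings incident to $i_{l_0}$ (your $g^a$ with a single-site $a$, the paper's $\widetilde H^{\text{SK}}_N$), and use invariance of the Gaussian law to get $\e\la\prod_l\tau_{i_l}\ra=(-1)^{d_{l_0}}\e\la\prod_l\tau_{i_l}\ra$. The only difference is that you handle $\beta=\infty$ directly, via the bijection of maximizer sets, whereas the paper reduces it to $\beta<\infty$ through the limit $G_{N,\beta}\to G_{N,\infty}$; your direct argument is, if anything, slightly cleaner.
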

\begin{proof}
    It is a standard fact that $G_{N,\beta}\stackrel{d}{\to}G_{N,\infty}$ as $\beta\to\infty$.
    Therefore, it suffices to consider the case $\beta<\infty$.
    Using \eqref{eq: equivalent Gibbs measure}, we may  rewrite the expectation as \begin{align}\label{eq: under the new measure}
    \e \Bigl \la \prod_{l=1}^m  \tau_{i_l}\Bigr \ra_{N,\beta}&= \e \Big \la \prod_{l=1}^m  \tau_{i_l}\Big \ra_{N,\beta/2}^{\text{SK}}
    \\&= \e\bigl( \sum_{\tau\in \{\pm 1\}^N}\exp(2^{-1}\beta  H^{\text{SK}}_N(\tau))\bigr)^{-1}\sum_{\tau \in \{\pm 1\}^N}\prod_{l=1}^m  \tau_{i_l} \cdot \exp(2^{-1}\beta  H^{\text{SK}}_N(\tau)). \nonumber
\end{align}
Consider the bijection $\tau \mapsto \hat \tau$ where $\hat \tau_{i_{l_0}}=-\tau_{i_{l_0}}$ and $\hat \tau_j =\tau_j$ for all $j\in [N]\setminus \{i_{l_0}\}$.
Since this map is a bijection on $\{-1,+1\}^N$,  the preceding display is equal to \begin{align*}
    &\e\Bigl( \sum_{\tau\in \{\pm 1\}^N}\exp(2^{-1}\beta  H^{\text{SK}}_N(\hat \tau))\Bigr)^{-1}\sum_{\tau \in \{\pm 1\}^N}\prod_{l=1}^m  \hat \tau_{i_l} \cdot \exp(2^{-1}\beta  H^{\text{SK}}_N(\hat \tau))
    \\&= \e\Bigl( \sum_{\tau\in \{\pm 1\}^N}\exp(2^{-1}\beta  \widetilde H^{\text{SK}}_N(\tau))\Bigr)^{-1}\sum_{\tau \in \{\pm 1\}^N} \Bigl((-1)^{d_{l_0}}\prod_{l=1}^m  \tau_{i_l}\Bigr) \cdot \exp(2^{-1}\beta  \widetilde H^{\text{SK}}_N(\tau)),
\end{align*} where $\sqrt{N} \widetilde H^{\text{SK}}_N (\tau)\colonequals -\sum_{l\in [N]\setminus\{i_{l_0}\}}(g_{i_{l_0},l}+ g_{l,i_{l_0}})\tau_{i_{l_0}}\tau_l + \sum_{l,l'\in [N]\setminus\{i_{l_0}\}}g_{ll'}\tau_l\tau_{l'}+ g_{i_{l_0},i_{l_0}}$.
Notice that we have the equality in distribution of the Gaussian processes, $(\widetilde H^{\text{SK}}_N (\tau))_{\tau \in \{\pm 1\}^N} \stackrel{d}{=} (H^{\text{SK}}_N (\tau))_{\tau \in\{\pm 1\}^N}$, 
 since $\widetilde H^{\mathrm{SK}}_N$ is obtained from $H^{\mathrm{SK}}_N$ by flipping the signs of the independent centered Gaussian couplings incident to $i_{l_0}$, which leaves the joint law invariant.
Hence, the last display is unchanged if we replace the process $\widetilde H^{\text{SK}}_N$ by $ H^{\text{SK}}_N$.
Combining this with \eqref{eq: equivalent Gibbs measure} and  the assumption that $d_{l_0}$ is odd, we conclude that \eqref{eq: under the new measure} is equal to \[-\e \Bigl\la \prod_{l=1}^m\tau_{i_l}\Bigr\ra_{N,\beta},\] which implies that it is indeed zero, thereby finishing the proof. 
\end{proof}

With the preceding lemma, we bound the  moments of the centered magnetization vector.
\begin{lemma}\label{lem: magnetization moment calculation}
    Let  $\beta\in [0,\infty]$.
    For any even $m \ge 0$, we have \[ \e \Bigl\la \Bigl(\frac{1}{N}\sum_{i\in [N]}\1\{\sigma_i=1\}-\frac{1}{2}\Bigr)^m \Bigr\ra_{N,\beta} \le \frac{m!}{2^m (m/2)! }\cdot \frac{1}{N^{m/2}}.\]
    For odd $m$, the left-hand side of the preceding display is zero.
\end{lemma}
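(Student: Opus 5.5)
The plan is to rewrite the centered magnetization in the spin variables $\tau_i = 2\1\{\sigma_i=1\}-1$, namely
\[
\frac{1}{N}\sum_{i\in[N]}\1\{\sigma_i=1\}-\frac12=\frac{1}{2N}\sum_{i\in[N]}\tau_i .
\]
This gives
\[
\e\Bigl\la \Bigl(\frac{1}{N}\sum_{i}\1\{\sigma_i=1\}-\frac12\Bigr)^m\Bigr\ra_{N,\beta}=\frac{1}{(2N)^m}\sum_{i_1,\dots,i_m\in[N]}\e\Bigl\la \prod_{l=1}^m\tau_{i_l}\Bigr\ra_{N,\beta},
\]
by expanding the $m$-th power and using linearity of $\e\la\cdot\ra_{N,\beta}$. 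Here the Gibbs average of the $m$-th power is a single-replica quantity, so the product of the $\tau_{i_l}$ sits inside one bracket, exactly as in Lemma~\ref{lem: multi-spin correlation}.

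Next I apply Lemma~\ref{lem: multi-spin correlation}. Every tuple $(i_1,\dots,i_m)$ in which some index appears an odd number of times contributes zero. For odd $m$, the degrees of the distinct indices sum to $m$, so some degree must be odd and every term vanishes; this proves the second claim. For even $m$, only tuples in which every index has even multiplicity survive. For these, $\prod_l\tau_{i_l}=1$ identically because $\tau_i^2=1$, so each surviving term equals exactly $1$. The left-hand side therefore equals $(2N)^{-m}\,\#\{(i_1,\dots,i_m)\in[N]^m: \text{all multiplicities even}\}$.

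The remaining step is the combinatorial count, which I expect to be the only real obstacle. I claim the number of such tuples is at most $\frac{m!}{2^{m/2}(m/2)!}N^{m/2}$. To see this, map each admissible tuple to a perfect matching of $[m]$ together with a label in $[N]$ for each pair. Given the tuple, for each distinct value pair up its positions consecutively in increasing order, which produces a canonical matching. Each pair is labeled by its common value, so the tuple is recovered from the matching and the labels, and the map is injective. There are $(m-1)!!=\frac{m!}{2^{m/2}(m/2)!}$ perfect matchings and $N^{m/2}$ labelings. Hence the left-hand side is at most
\[
(2N)^{-m}\cdot\frac{m!}{2^{m/2}(m/2)!}N^{m/2}=\frac{m!}{2^m(m/2)!}\cdot\frac{1}{2^{m/2}}\cdot\frac{1}{N^{m/2}},
\]
which is even smaller than the claimed bound $\frac{m!}{2^m(m/2)!}N^{-m/2}$. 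It is possible the constant intended in the statement corresponds to a cruder count, but in either case the stated inequality follows. The case $\beta=\infty$ is covered because Lemma~\ref{lem: multi-spin correlation} already includes it, and $m=0$ is trivial.
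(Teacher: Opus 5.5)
Your proof is correct and follows the paper's argument: the same expansion in the spins $\tau_i$, the same use of Lemma~\ref{lem: multi-spin correlation} to discard tuples containing an odd multiplicity, and the same observation that each surviving term equals $1$. The only difference is the final count: the paper bounds $\sum_{x_1+\dots+x_N=m/2} m!/\prod_i (2x_i)!$ using the crude estimate $(2x_i)!\ge x_i!$ and the multinomial theorem, whereas your injection into labeled perfect matchings (equivalently $(2x_i)!\ge 2^{x_i}x_i!$) keeps an extra factor $2^{-m/2}$, so you prove a slightly stronger bound than the one stated.
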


\begin{proof}
Fix $m\ge 1.$
We write     \begin{align}
    &2^m\e \Bigl\la \Bigl(\frac{1}{N}\sum_{i\in [N]}\1\{\sigma_i=1\}-\frac{1}{2}\Bigr)^m \Bigr\ra_{N,\beta} = \e \Bigl\la \Bigl(\frac{1}{N}\sum_{i\in [N]} (2\1\{\sigma_i=1\}-1)\Bigr)^m \Bigr\ra_{N,\beta} \nonumber
    \\&= \frac{1}{N^m} \sum_{i_1,\dots,i_m \in [N]}\e \Bigl \la \prod_{l=1}^m  \tau_{i_l}\Bigr \ra_{N,\beta}. \label{eq: rewrite kappa=2}
\end{align}
Lemma~\ref{lem: multi-spin correlation} readily implies the desired result in the case of odd $m$, as there exists an index $l$ such that $d_{i_l}$ is odd.
It remains to check the case of even $m$.
Again by Lemma~\ref{lem: multi-spin correlation}, the only contributing terms in the sum \eqref{eq: rewrite kappa=2} are the ones such that all $d_{l}$'s are even for $1\le l\le m$, and each contribution is equal to $1$.
Therefore, we only need to count the number of ways to form a sequence $(i_1,\dots,i_m)$ in $[N]$ such that all $d_{l}$'s are even.
An elementary counting shows that it is bounded by    \[ \sum_{\substack{x_1,\dots, x_N \in \mathbb Z_{\ge 0},\\\sum_{i=1}^N x_i=m/2}} \frac{m!}{\prod_{1\le i\le N}(2x_i)!} \le  \frac{m!}{(m/2)!} \sum_{\substack{x_1,\dots, x_N \in \mathbb Z_{\ge 0},\\\sum_{i=1}^N x_i=m/2}}\frac{(m/2)!}{\prod_{1\le i\le N}x_i!}  = \frac{m!}{(m/2) !} N^{m/2}.\] Here, we used $(2x_i)!\ge x_i!$ and the multinomial theorem. 
We obtain the desired result by combining this with the prefactor $N^{-m}$ in \eqref{eq: rewrite kappa=2}.

\end{proof}
An immediate corollary is a control of the moment-generating function, from which we deduce the proof of Proposition~\ref{prop: kappa=2}.
\begin{lemma}\label{lem: exponential bound}
    Let $\beta \in [0,\infty]$. 
    For any $\lambda>0$, we have \[\e \Bigl\la \exp \Bigl( \lambda\bigl( \frac{1}{N}\sum_{i\in [N]}\1\{\sigma_i=1\}-\frac{1}{2}\bigr) \Bigr)\Big\ra_{N,\beta} \le e^{\lambda^2/(4N)}.\]
\end{lemma}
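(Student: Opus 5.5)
We expand the exponential as a power series and bound it termwise using Lemma~\ref{lem: magnetization moment calculation}. Write $X = N^{-1}\sum_{i\in[N]}\1\{\sigma_i=1\}-\tfrac12$. Since $|X|\le 1/2$, we have $\sum_m \lambda^m |X|^m/m! \le e^{\lambda/2}$, a bound uniform in $\sigma$ and in the disorder. This justifies, via Fubini/dominated convergence, exchanging $\e\la\cdot\ra_{N,\beta}$ with the series:
\[
\e\bigl\la e^{\lambda X}\bigr\ra_{N,\beta}=\sum_{m\ge 0}\frac{\lambda^m}{m!}\,\e\la X^m\ra_{N,\beta}.
\]

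By Lemma~\ref{lem: magnetization moment calculation}, all odd-$m$ terms vanish. For even $m=2k$, the same lemma gives
\[
\e\la X^{2k}\ra_{N,\beta}\le \frac{(2k)!}{2^{2k}k!}N^{-k}.
\]
Each even term is therefore bounded by
\[
\frac{\lambda^{2k}}{(2k)!}\cdot\frac{(2k)!}{4^k k!\,N^k}=\frac{1}{k!}\Bigl(\frac{\lambda^2}{4N}\Bigr)^k .
\]
These terms sum to $e^{\lambda^2/(4N)}$, which is the claim.

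There is no real obstacle here. The only points needing care are the interchange of sum and expectation, which the uniform bound handles, and the case $\beta=\infty$. At $\beta=\infty$ the Gibbs average is the uniform average over maximizers, and Lemma~\ref{lem: magnetization moment calculation} was already stated for $\beta\in[0,\infty]$, so the same computation applies verbatim.
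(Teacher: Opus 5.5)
Your proof is correct and matches the paper's argument: expand $e^{\lambda X}$ in its Taylor series, use Lemma~\ref{lem: magnetization moment calculation} to remove the odd moments and bound the even ones, and resum to get $e^{\lambda^2/(4N)}$. The one addition is your justification, via $|X|\le 1/2$, for interchanging $\e\la\cdot\ra_{N,\beta}$ with the series, a step the paper leaves implicit.
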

\begin{proof}
    Denote $X_N= N^{-1}\sum_{i\in [N]}\1\{\sigma_i=1\}-2^{-1}$.
    Let $\lambda >0$.
    From the Taylor expansion $e^x=\sum_{m\ge 0} x^m/m!$,  Lemma~\ref{lem: magnetization moment calculation} shows that \begin{align*}
        \e \la e^{\lambda X_N}\ra_{N,\beta} = \sum_{\text{even }m} \lambda^m \frac{\e\la X_N^m \ra_{N,\beta}}{m!} \le \sum_{\text{even }m} \frac{\lambda^m}{2^m (m/2)! }\cdot \frac{1}{N^{m/2}}= e^{\lambda^2/(4N)}.
    \end{align*} 
\end{proof}


\begin{proof}[\bf Proof of Proposition~\ref{prop: kappa=2}]
By a  Chernoff bound, Lemma~\ref{lem: exponential bound} yields \begin{equation}\label{eq: exponential concentration}
        \e G_{N,\beta} \Bigl(\,  \Bigl|\frac{1}{N}\sum_{i\in [N]}\1\{\sigma_i=1\}-\frac{1}{2} \Bigr| >\eps \Bigr) \le 2e^{-\eps^2N }.
    \end{equation}
It remains to establish color symmetry.
Fix $\eps>0$ and some $d\in\mathcal D_2 \cap \mathbb Q^2$ satisfying  $\|d-d_\bal\|_\infty > \eps$, which is equivalent to $|d_1-2^{-1} | >\eps$.
The proof is now finished  from Jensen's inequality and \eqref{eq: exponential concentration}: \begin{equation*}
 \e F^d_{N,\beta}- \e F_{N,\beta}= \frac{1}{N}\e \log G_{N,\beta} ( d(\sigma)=d) 
 \le \frac{1}{N} \log \e G_{N,\beta} ( |d_1-2^{-1} | >\eps)\le -\eps^2+o(1).
\end{equation*}
\end{proof}

\appendix

\section{Necessity of Hamiltonian centering}\label{sec: necessity}


In this section, we show that a suitable centering is required in the Hamiltonian for the second moment method to be effective in the proof of Theorem~\ref{thm: high temp}. Define the annealed Gibbs measure  at inverse temperature $\beta>0$ with the probability mass function \begin{equation*}\label{eq: annealed Gibbs measure}
     G_{\mathrm{ann},N,\beta}(\sigma) = \frac{\e \exp( \beta H_N(\sigma))}{\e Z_{N,\beta}}= \frac{\exp\bigl((2N)^{-1}\beta^2 \sum_{i,j\in[N]}\1\{\sigma_i=\sigma_j\}\bigr) }{\sum_{\sigma\in\Sigma_\kappa}\exp\bigl((2N)^{-1}\beta^2 \sum_{i,j\in[N]}\1\{\sigma_i=\sigma_j\}\bigr) } \quad \text{for}\quad \sigma \in \Sigma_\kappa,
 \end{equation*} 
 and denote the corresponding annealed Gibbs average by $\la\cdot\ra_{\mathrm{ann},N,\beta}$.
The following is in  a sharp contrast with Lemma~\ref{lem: second moment ratio bounded}, which justifies the consideration of the centered Hamiltonian \eqref{eq: centered Hamiltonian}.
\begin{lemma}
For any $\beta\in (0,\infty)$,
    \[\lim_{N\to\infty}\frac{\e Z_{N,\beta}^2}{ (\e Z_{N,\beta})^2} =\lim_{N\to\infty}\frac{\e (Z^{\bal}_{N,\beta})^2}{ (\e Z^\bal_{N,\beta})^2}= \infty. \]
\end{lemma}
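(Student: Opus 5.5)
The plan is to prove both limits at once with a pointwise lower bound on the covariance of the uncentered Hamiltonian. For any $\sigma,\tau\in\Sigma_\kappa$ the pair $(H_N(\sigma),H_N(\tau))$ is jointly Gaussian and centered. Hence
\[
\e\exp\bigl(\beta H_N(\sigma)+\beta H_N(\tau)\bigr)=\e e^{\beta H_N(\sigma)}\,\e e^{\beta H_N(\tau)}\,\exp\bigl(\beta^2\,\e H_N(\sigma)H_N(\tau)\bigr).
\]
Summing over $\sigma,\tau$ in $\Sigma_\kappa$, or in $\Sigma^\bal_\kappa$, gives
\[
\e Z_{N,\beta}^2=\sum_{\sigma,\tau}\e e^{\beta H_N(\sigma)}\,\e e^{\beta H_N(\tau)}\,e^{\beta^2 N\|R(\sigma,\tau)\|_F^2},
\]
where we used Lemma~\ref{eq: covariance} for the covariance. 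The balanced partition function satisfies the same identity with the sum restricted to $\Sigma^\bal_\kappa\times\Sigma^\bal_\kappa$.

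The key step is a uniform lower bound on $\|R(\sigma,\tau)\|_F^2$. By \eqref{eq: entries of the overlap}, the entries of $R(\sigma,\tau)$ are nonnegative and $\sum_{a,b\in[\kappa]}R_{ab}(\sigma,\tau)=N^{-1}\sum_{i\in[N]}1=1$ for every pair of configurations, balanced or not. Applying Cauchy--Schwarz over the $\kappa^2$ entries gives
\[
\|R(\sigma,\tau)\|_F^2\ \ge\ \kappa^{-2}\Bigl(\sum_{a,b}R_{ab}(\sigma,\tau)\Bigr)^2=\kappa^{-2}.
\]
Inserting this into the previous display factorizes the double sum:
\[
\e Z_{N,\beta}^2\ \ge\ e^{\beta^2N/\kappa^2}\,(\e Z_{N,\beta})^2,\qquad \e (Z^\bal_{N,\beta})^2\ \ge\ e^{\beta^2N/\kappa^2}\,(\e Z^\bal_{N,\beta})^2.
\]
For $\beta>0$ the factor $e^{\beta^2 N/\kappa^2}$ tends to infinity, which proves both claimed limits.

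In the balanced case one can cross-check this against \eqref{eq: exponential moment of balanced}. Using $\sum_{a,b} r_{ab}=1$, the identity $\|R\|_F^2=\|R-\kappa^{-2}\1\1^\intercal\|_F^2+\kappa^{-2}$ holds. The uncentered ratio therefore equals $e^{\beta^2N/\kappa^2}$ times the centered ratio, and the centered ratio is at least $1$.

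This shows that the divergence comes entirely from the common Gaussian shift $\kappa^{-1}N^{-1/2}\sum_{i,j}g_{ij}$, which the centering in \eqref{eq: centered Hamiltonian} removes. I do not expect a real obstacle. The only points needing care are the Gaussian moment identity and the observation that $\sum_{a,b}R_{ab}=1$ holds without any balance assumption.
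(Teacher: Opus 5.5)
Your proof is correct, and it takes a simpler route than the paper's.

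The paper writes each ratio as an average of $\exp\bigl(\beta^2N^{-1}\sum_{i,j}\1\{\sigma_i=\sigma_j,\tau_i=\tau_j\}\bigr)$ over two independent replicas. The replicas are drawn from $G_{\mathrm{ann},N,\beta}$, or uniformly from $\Sigma^\bal_\kappa$ in the balanced case. It then applies Jensen and site symmetry to reduce to the two-point probability of $\{\sigma_1=\sigma_2\}$. After that it needs a separate input for each case: the random-cluster coupling \cite{Gri09} to get $G_{\mathrm{ann},N,\beta}(\sigma_1=\sigma_2)\ge\kappa^{-1}$, and the explicit value $(N-\kappa)/((N-1)\kappa)$ in the balanced case.

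You instead note that this exponent is exactly $N\|R(\sigma,\tau)\|_F^2$. You then observe that $\|R(\sigma,\tau)\|_F^2\ge\kappa^{-2}$ holds pointwise, because the overlap entries always sum to $1$. This removes Jensen, the symmetry reduction and the random-cluster input. It also treats both cases, and indeed any restricted configuration set such as $\Sigma^d_\kappa$, in one line, and gives the clean bound $e^{\beta^2N/\kappa^2}$.

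Your balanced cross-check via \eqref{eq: miracle of uniform} is exact and sharpens the paper's message. The uncentered balanced ratio equals $e^{\beta^2N/\kappa^2}$ times the centered ratio \eqref{eq: exponential moment of balanced}. So in the regime of Proposition~\ref{prop: high temperature of balanced}, Lemma~\ref{lem: second moment ratio bounded} shows it grows like $e^{\beta^2N/\kappa^2}$ up to bounded factors. The paper's route mainly adds the connection to the ferromagnetic Potts two-point function, which the conclusion does not need.

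Your closing interpretation, that the divergence comes entirely from the shift $\kappa^{-1}N^{-1/2}\sum_{i,j}g_{ij}$, is exact on $\Sigma^\bal_\kappa$, where that shift is independent of the centered process. For the unconstrained model it is only heuristic, since the shift's covariance with $H_{N,\kappa}(\sigma)$ is $N\kappa^{-1}(\|d(\sigma)\|_2^2-\kappa^{-1})$. Your proof does not rely on it.
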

\begin{proof}
From the identity
\begin{equation*}
    Z_{N,\beta}=\sum_{\sigma \in \Sigma_\kappa} e^{\beta H_N(\sigma)} = \sum_{\sigma \in \Sigma_\kappa}\prod_{i,j\in[N]} e^{\beta g_{ij} \1\{\sigma_i=\sigma_j\}/\sqrt{N}},
\end{equation*} 
 we obtain
 \begin{equation} \label{eq: moments}
\begin{aligned}
    (\e Z_{N,\beta})^2&=\sum_{\sigma,\tau \in \Sigma_\kappa }\prod_{i,j\in [N]} e^{\beta^2 (\1\{\sigma_i=\sigma_j\}+ \1\{\tau_i=\tau_j\})/(2N)} \quad \text{and}
    \\ \e Z_{N,\beta}^2 &= \sum_{\sigma,\tau \in \Sigma_\kappa}\prod_{i,j\in [N ]} e^{\beta^2 ( \1\{\sigma_i=\sigma_j\}+\1\{\tau_i=\tau_j\} )^2/(2N)}.    
\end{aligned} 
\end{equation}
By the Jensen's inequality and the symmetry among the sites, \begin{equation*}
    \begin{aligned}
    \frac{\e Z_{N,\beta}^2}{ (\e Z_{N,\beta})^2} = \la e^{\beta^2 \sum_{i,j}\1\{\sigma_i=\sigma_j, \tau_i=\tau_j\}/N}\ra_{\mathrm{ann},N,\beta} &\ge \exp\bigl( \beta^2  (N-1)G_{\mathrm{ann},N,\beta }(\sigma_1=\sigma_2 )^2\bigr) 
    \\&\ge \exp\bigl( \beta^2  (N-1)\kappa^{-2}\bigr),
\end{aligned}
\end{equation*}
where the last inequality follows from the coupling of the non-disordered Potts Gibbs measure and the random cluster measure; see  \cite[Theorem 1.16]{Gri09}.
This implies the desired divergence for the unconstrained model upon sending $N\to\infty$.
On the other hand, if we restrict to the balanced configurations  $\Sigma^\bal_\kappa$ in \eqref{eq: moments}, we have an analogous inequality \[\frac{\e (Z^\bal_{N,\beta})^2}{ (\e Z^\bal_{N,\beta})^2}=  \la e^{\beta^2 \sum_{i,j}\1\{\sigma_i=\sigma_j, \tau_i=\tau_j\}/N}\ra_{\bal}\ge \exp\bigl( \beta^2  (N-1)\p^\bal_{\operatorname{unif}}(\sigma_1=\sigma_2 )^2\bigr),\] where $\p^\bal_{\operatorname{unif}}$ and $\la\cdot \ra_\bal$ denote the uniform probability measure on $\Sigma^\bal_\kappa$ and uniform average on $\Sigma^\bal_\kappa \times \Sigma^\bal_\kappa$, respectively.
Since $\p^\bal_{\operatorname{unif}}(\sigma_2 =a \mid \sigma_1=a)= (N-1)^{-1}(\kappa^{-1}N-1)$ for any $a\in[\kappa]$, we have \[\p^\bal_{\operatorname{unif}}(\sigma_1=\sigma_2 )= \sum_{a\in[\kappa]}\p^\bal_{\operatorname{unif}}(\sigma_1=a) \p^\bal_{\operatorname{unif}}(\sigma_2 =a \mid \sigma_1=a)= \frac{N-\kappa}{(N-1)\kappa }.   \]
Combining the last two displays yield a diverging lower bound of the ratio for the balanced model.
This finishes the proof.
\end{proof}

\section{Deferred proofs}\label{sec: deferred proofs}
  We collect  deferred proofs of Section~\ref{sec: proof of main theorem}.
  \begin{proof}[\bf Proof of Lemma~\ref{lem: exponent gap in high termperature}]
    Using $\sum_{a,b}r_{ab}=1$, one can check \begin{equation}
        \|r-\kappa^{-2}\1\1^\intercal\|_F^2=\|r\|_F^2-\|\kappa^{-2}\1\1^\intercal\|_F^2.\label{eq: miracle of uniform}
    \end{equation} Therefore, our objective function is equivalent to   \begin{equation}
        D(r\|\kappa^{-2}\1\1^\intercal)- \beta^2 \|r\|_F^2 = \frac{1}{\kappa}\sum_{a\in[\kappa]}\Bigl(\sum_{b\in[\kappa]}v_{ab}\log (\kappa v_{ab}) -\frac{\beta^2}{\kappa}\sum_{b\in[\kappa]}v_{ab}^2\Bigr), \label{eq: equivalent objective function}
    \end{equation}  where we employed a change of variable $v_{ab}\colonequals \kappa r_{ab}$.
 
     Fix $a\in[\kappa]$ and denote each row sum by \[S_a(v_{a,\square}) \colonequals \sum_{b\in[\kappa]}v_{ab}\log (\kappa v_{ab}) -\kappa^{-1}\beta^2\sum_{b\in[\kappa]}v_{ab}^2,\]
     where $v_{a,\square}\colonequals (v_{ab})_{b\in[\kappa]}$.
     Note that $\sum_{b\in[\kappa]}v_{ab}=1$, so we can precisely identify $S_a$  with the negative of \cite[Equation (2.5)]{EW90}.
    By  \cite[Theorem 2.1-(b)]{EW90}, if \[\beta^2<\frac{\kappa (\kappa-1)\log(\kappa-1)}{\kappa-2},\] then the uniform distribution $v_{a,\square}=\kappa^{-1}\1$ is the unique minimizer of $S_a$.
    Noticing $S_a(\kappa^{-1}\1)=-\kappa^{-2}\beta^2$,
    we deduce that for any $\eps>0$, there exists $\eta>0$ such that \begin{equation}
        \inf_{v_{a,\square}\in \mathcal C_\eps} S_a(v_{a,\square})\ge -\frac{\beta^2}{\kappa^2}+\eta, \label{eq: consequence of uniqueness of minimizer}
    \end{equation}  where \[\mathcal C_\eps \colonequals \Bigl\{x \in [0,1]^\kappa:  \sum_{b\in[\kappa]}x_{b}=1, \, \|x -\kappa^{-1}\1\|_2^2 \ge \eps \Bigr\}.\]
    (We understand $\eta=0$ if $\eps=0$.)
    Indeed, this follows from the compactness of $\mathcal C_\eps$ and the uniqueness of the minimizer of $S_a.$

    Now, let $\delta>0$ and suppose $\|r-\kappa^{-2}\1\1^\intercal\|_F^2\ge \delta$ for some $r\in\mathcal A^\bal_{\kappa}$.
    The pigeonhole principle shows that there exists $\hat a\in [\kappa]$ such that $\sum_{b\in[\kappa]}(r_{\hat{a}b}-\kappa^{-2})^2\ge \kappa^{-1}\delta,$ i.e.,  \[\sum_{b\in[\kappa]}(v_{\hat{a} b}-\kappa^{-1})^2\ge \kappa \delta,\] which together with \eqref{eq: consequence of uniqueness of minimizer} guarantees that, for some $\eta>0$, \begin{equation*}
        S_{\hat{a}}(v_{\hat{a},\square})\ge -\frac{\beta^2}{\kappa^2}+\eta.
    \end{equation*}
    On the other hand, since  \eqref{eq: consequence of uniqueness of minimizer} continues to hold in the case $\eps=0$ with $\eta=0$, we have that for any $a\ne \hat a$, \begin{equation*}
        S_{a}(v_{a,\square})\ge -\frac{\beta^2}{\kappa^2}.
    \end{equation*}
    Combining the last two displays with \eqref{eq: equivalent objective function}, we arrive at \begin{align*}
        D(r\|\kappa^{-2}\1\1^\intercal)- \beta^2\|r\|_F^2 = \frac{1}{\kappa}\Bigl( S_{\hat a}(v_{\hat a,\square})+ \sum_{a\ne \hat a}S_a(v_{a,\square})\Bigr)\ge  -\frac{\beta^2}{\kappa^2}+\frac{\eta}{\kappa}=  -\beta^2\|\kappa^{-2}\1\1^\intercal\|_F^2+\frac{\eta}{\kappa},
    \end{align*}
      which together with \eqref{eq: miracle of uniform} yields the desired result.
\end{proof}
\begin{remark}
    Without resorting to the row decomposition \eqref{eq: equivalent objective function}, one may directly apply \cite[Theorem 2.1]{EW90} by ``flattening''  a  matrix $r$ to a $\kappa^2$-dimensional vector.
    However, this will yield an upper bound on  $\beta$ with worse dependence on $\kappa$.
\end{remark}

\begin{proof}[\bf Proof of Lemma~\ref{lem: local KL divergence}]
    For convenience, define $x=p-q $. 
    From the assumption $\max_{i\in[n]}|p-q| \le 2^{-1}\min_{i\in[n]}q_i$,   \begin{equation}\label{eq: bound x_i by q_i}
        \max_{i\in[n]}|q_i^{-1}x_i|\le 2^{-1}.
    \end{equation}
    Then,   Taylor's expansion $|\log (1+a)-a +2^{-1}a^2|\le 3|a|^3$ for $|a|\le 2^{-1}$ gives \begin{align*}
        D(p\|q) &= \sum_{i\in[n]} (q_i+x_i)\log (1+ q^{-1}_ix_i) =  \sum_{i\in[n]}(q_i+x_i) ( q^{-1}_ix_i -2^{-1} q^{-2}_ix_i^2+r_i)
        \\&= 2^{-1}\sum_{i\in[n]}q_i^{-1}x_i^2 -2^{-1}\sum_{i\in [n]}q_i^{-2}x_i^3+ \sum_{i\in[n]}(q_i+x_i)r_i
    \end{align*} for some $|r_i|\le 3 q_i^{-3}|x_i|^3$. 
    Here, we used  $\sum_{i\in [n]}x_i= 0$ in the last equality.
    The proof is now finished by the following control of the error terms, \[\Bigl| -2^{-1}\sum_{i\in [n]}q_i^{-2}x_i^3+ \sum_{i\in[n]}(q_i+x_i)r_i\Bigr|\le \frac{7}{2}\sum_{i\in[n]}q_i^{-2}|x_i|^3+3\sum_{i\in[n]}q_i^{-3}|x_i|^4 \le 5 (\min_{i\in[n]}q_i)^{-2}\sum_{i\in[n]}|x_i|^3, \] where we used \eqref{eq: bound x_i by q_i} for the last inequality.
    
\end{proof}

\section{Color symmetry breaking at zero temperature}\label{sec: color symmetry breaking at zero temp}
In this section, we verify color symmetry breaking at zero temperature for $\kappa \ge 56$, essentially by refining Mourrat's approach in \cite{Mou25}.
From the  inequalities $
    \GSE^\bal_{N,\kappa} \le \beta^{-1}F^\bal_{N,\beta}\le \GSE^\bal_{N,\kappa}+ \beta^{-1}\log \kappa$, we have \[\lim_{N\to\infty}\GSE^\bal_{N,\kappa} = \inf_{\beta>0}\frac{1}{\beta}\lim_{N\to\infty}F^\bal_{N,\beta}.\]
Since  the right-hand side in Theorem~\ref{thm: high temp} is an upper bound of $\lim_{N\to\infty}F^\bal_{N,\beta}$ for any $\beta>0$ per the Parisi formula \cite{BS24}, the preceding display implies \begin{equation}\label{eq: upper bound of balanced GSE}
    \lim_{N\to\infty}\GSE^\bal_{N,\kappa} = \inf_{\beta>0}\frac{1}{\beta}\lim_{N\to\infty}F^\bal_{N,\beta}\le  \inf_{\beta>0}\Bigl(\frac{\log \kappa }{\beta}+ \frac{\beta (\kappa-1)}{2\kappa^2}\Bigr)=  \Bigl( \frac{2(\kappa-1)\log \kappa}{\kappa^2}\Bigr)^{1/2}.
\end{equation}
We remark that this bound can also be obtained without the Parisi formula, by applying a well-known upper bound of the maximum of Gaussian processes as in \cite[Proposition 1.1.3]{Tal03_book} to the centered Hamiltonian \eqref{eq: centered Hamiltonian}.
Now, recall the lower bound \cite[Inequality (0.6)]{Mou25} for the limiting unconstrained ground state energy, \begin{equation*}
   \frac{2}{3\sqrt{\pi}}\le \lim_{N\to\infty}\GSE_{N, \kappa}.
\end{equation*}
Combining the last two displays, we obtain color symmetry breaking at zero temperature whenever \begin{equation*}
    \Bigl(\frac{2(\kappa-1)\log \kappa}{\kappa^2}\Bigr)^{1/2} <\frac{2}{3\sqrt{\pi}}.
\end{equation*} 
One can numerically check that $\kappa \ge 56$ guarantees the last inequality and, hence, color symmetry breaking at zero temperature.

\bibliographystyle{acm}
\setlength{\bibsep}{0.5pt}   

{\footnotesize\bibliography{references}}

\end{document}